\DeclareFontFamily{OT1}{rsfs10}{}
\DeclareFontShape{OT1}{rsfs10}{m}{n}{ <-> rsfs10 }{}
\DeclareMathAlphabet{\mathscript}{OT1}{rsfs10}{m}{n}
\DeclareMathOperator{\Hom}{Hom}     % \Hom   = gruppo Hom (diritto)
\DeclareMathOperator{\Tors}{Tors}    % \Hom   = sottogruppo di torsione
\DeclareMathOperator{\Pic}{Pic}     % \Pic   = gruppo di Picard
\DeclareMathOperator{\Cl}{Cl}       % \Cl    = gruppo dei divisori di Weil mod. linear equivalence
\DeclareMathOperator{\rk}{rk}       % \rk    = rango
\DeclareMathOperator{\Mov}{Mov}     % \Mov   = moving cone
\DeclareMathOperator{\Nef}{Nef}     % \Nef   = cono dei divisori nef
\DeclareMathOperator{\Eff}{Eff}     % \Eff   = cono dei divisori effettivi
\DeclareMathOperator{\codim}{codim} % \codim = codimensione
\DeclareMathOperator{\REF}{REF}     % \REF   = Hermite normal form
\def \b{\beta }
\def \s{\sigma }
\def \Ga{\Gamma }
\def \Si{\Sigma }
\def \g{\gamma}
\def \aa{\mathbf{a}}
\def \q{\mathbf{q}}
\def \v{\mathbf{v}}
\def \w{\mathbf{w}}
\def \1{\mathbf{1}}
\def \0{\mathbf{0}}
\def\P{{\mathbb{P}}}
\def\p2{\mathbb{P}^2}
\def\p3{\mathbb{P}^3}
\def\p4{\mathbb{P}^4}
\def\rk{\operatorname{rk}}
\def\GL{\operatorname{GL}}
\def\Z{\mathbb{Z}}
\def\C{\mathbb{C}}
\def\R{\mathbb{R}}
\def\M{\mathbf{M}}
\def\Q{\mathbb{Q}}
\def\I{\mathcal{I}}
\def\Ls{\mathcal{L}}
\def\CQ{\mathcal{Q}}
\def\SF{\mathcal{SF}}
\def\gkz{\mathcal{Q}}
\def\G{\mathcal{G}}
\def\Ga{\Gamma}
\newcommand{\oneline}{\vskip12pt}
\newcommand{\halfline}{\vskip6pt}
\newcommand{\longmapsfrom}{\mathrel{\reflectbox{\ensuremath{\longmapsto}}}}
\begin{document}

\title{A numerical ampleness criterion via Gale duality\thanks{The authors were partially supported by the MIUR-PRIN 2010-11 Research Funds ``Geometria delle Variet\`{a} Algebriche''. The first author is also supported by the I.N.D.A.M. as a member of the G.N.S.A.G.A.}
}
%\subtitle{Do you have a subtitle?\\ If so, write it here}

\titlerunning{An ampleness criterion via Gale duality}        % if too long for running head

\author{Michele Rossi \and Lea Terracini}

\authorrunning{M. Rossi and L.Terracini} % if too long for running head

\institute{M. Rossi and L. Terracini\at
              Dipartimento di Matematica, Universit\`a di Torino,
via Carlo Alberto 10, 10123 Torino \\
              Tel.: +39 011 670 2813\\
              Fax: +39 011 670 2878\\
              \email{michele.rossi@unito.it, lea.terracini@unito.it}           %  \\
%             \emph{Present address:} of F. Author  %  if needed
           }

\date{}
% The correct dates will be entered by the editor

\maketitle

\begin{abstract}
The main object of the present paper is a numerical criterion determining when a Weil divisor of a $\Q$--factorial complete toric variety admits a positive multiple Cartier divisor which is either numerically effective (nef) or ample. It is a consequence of $\Z$--linear interpretation of Gale duality and se\-con\-dary fan as developed in several previous papers of us. As a byproduct we get a computation of the Cartier index of a Weil divisor and a numerical characterization of weak $\Q$--Fano, $\Q$--Fano, Gorenstein, weak Fano and Fano toric varieties. Several examples are then given and studied.
\keywords{$\Q$--factorial complete toric variety \and ample divisor \and nef divisor \and $\Z$-liner Gale duality \and secondary fan \and ampleness criterion \and Cartier index \and $\Q$-Fano toric variety.}
% \PACS{PACS code1 \and PACS code2 \and more}
\subclass{14M25 \and 52B20}
\end{abstract}

\tableofcontents

\section*{Introduction}
This paper is devoted to present a numerical criterion, namely Theorem \ref{thm:criterio}, determining when a Weil divisor of a $\Q$--factorial complete toric variety admits a positive multiple Cartier divisor which is either numerically effective (nef) or ample. It is obtained as an easy consequence of $\Z$--linear interpretation of Gale duality and secondary fan developed in our previous papers \cite{RT-LA&GD}, \cite{RT-QUOT} and \cite{RT-Qfproj}. As an immediate application, we give a numerical characterization of (weak) $\Q$--Fano, (weak) Fano and Gorenstein toric varieties (see Theorem \ref{thm:Q-Fano}) and their toric coverings unramified in codimension $1$, called \emph{coverings in codimension $1$}, or simply \emph{$1$--coverings} (see Corollary \ref{cor:Q-Fano}).

Let us first of all recall that, in the literature, there are essentially three ways of presenting a $\Q$--factorial complete toric variety $X$ of dimension $n$:
\begin{enumerate}
  \item by giving a rational simplicial complete fan $\Si$ of cones in $N\otimes\R\cong\R^n$, with $N\cong\Z^n$: our hypothesis allows us to present $\Si$ by means of the $n$--skeleton $\Si(n)$ of its maximal cones and thinking of $\Si$ as the set of all the faces of cones in $\Si(n)$;
  \item by means of a Cox geometric quotient $(\C^{n+r}\setminus Z)/\Hom(\Cl(X),\C^*)$, along the lines of \cite{Cox}, where $r$ turns out to be the Picard number (in the following called \emph{rank}) of $X$;
  \item by means of a suitable \emph{bunch $\mathcal{B}$ of cones} in $F^r_{\R}=\Cl(X)\otimes\R$, as presented in \cite{Berchtold-Hausen}.
\end{enumerate}
In the first case (1) we will denote by $V$ the $n\times (n+r)$ integer \emph{reduced fan matrix} (see Definition \ref{def:matrice fan}) whose columns are given, up to the order, by the primitive generators of 1--dimensional cones (rays) in the 1--skeleton $\Si(1)$.

\noindent In the second case (2) the action of $G(X):=\Hom(\Cl(X),\C^*)$ can be described by means of a couple $(Q,\Ga)$ of matrices such that $Q$ is an integer and \emph{positive} (meaning that it admits only non--negative integer entries) $r\times (n+r)$ matrix in row echelon form ($\REF$, for short) and $\Ga$ is a \emph{torsion matrix}, as explained in \cite[Rem.~4.1]{RT-QUOT}. In the following $Q$ is called a \emph{weight matrix} of $X$, while we do not deal with the torsion matrix $\Ga$, whose definition is then skipped. In the simplest case of rank $r=1$, $X$ is always given by a finite abelian quotient of a weighted projective space (so called a \emph{fake} WPS) \cite{BC}, \cite{Conrads}, \cite{RT-WPS}, $Q$ gives the weight vector of the covering WPS and $\Ga$ describes the finite abelian quotient. For higher values of $r$, $X$ turns out to be a finite abelian quotient of a poly weighted space (PWS, for short) \cite[Thm.~2.2]{RT-QUOT}, $Q$ gives the weight matrix of the covering PWS and $\Ga$ describes the finite abelian quotient: notice that if $r\leq 2$ the $X$ turns out to be always a projective toric variety, but this fact does no more hold for $r\geq 3$ \cite{RT-R2proj}.

\noindent In the third case (3) the intersection of all the cones in the bunch $\mathcal{B}$ turns out to give the K\"{a}hler cone $\Nef(X)\subseteq F^r_{\R}$: in particular $X$ is projective if and only if $\Nef(X)$ is full dimensional and also called a \emph{chamber} of the secondary (or GKZ) fan.

\halfline

The ampleness criterion here proposed by Theorem \ref{thm:criterio} is completely based on the weight matrix $Q$ of $X$, differently from most part of known criteria, essentially based either on the fan $\Sigma$ or the polytope $\Delta_D$ associated with the divisor $D$. Then it is immediately applied to complete toric varieties presented as in either (2) or (3), above.

If we are dealing with a toric variety $X$ defined as in (1), then recall that a weight matrix of $X$ can be computed, starting from a fan matrix $V$, looking at the bottom $r$ rows of a switching matrix $U\in\GL(n+r,\Z)$ such that $H=U\cdot V^T$ gives the Hermite Normal Form (HNF) of the transposed fan matrix $V^T$ \cite[Prop.~4.3]{RT-LA&GD}: the advantage of this approach is that both $H$ and $U$ are computed by many computer algebra packages. Under our hypothesis we can always assume $Q$ to be a positive and REF matrix \cite[Thm.~3.18]{RT-LA&GD}.

A point of interest of the ampleness criterion presented in Theorem \ref{thm:criterio} is that checking either numerically effectiveness or ampleness of a toric divisor is completely reduced to linear algebraic operations on the weight matrix $Q$. We do not know if our approach may give rise to more efficient procedures than those which can been deduced by well known ampleness criteria on toric varieties, but certainly it looks easily understandable and quickly implementable in many computer algebra packages.

Let us notice that, from the purely theoretic point of view, Theorem \ref{thm:criterio} can be probably easily deduced by known results as, e.g., those given by Berchtold and Hausen in \cite{Berchtold-Hausen}. Anyway we believe that the linear algebraic approach here given is original and immediately applicable to toric data presenting a $\Q$-factorial complete toric variety.

\halfline

When this criterion is applied to the anticanonical Weil divisor $-K_X=\sum_jD_j$ one immediately gets a numerical criterion for detecting if a given $\Q$--factorial complete toric variety $X$ is either (weak) $\Q$--Fano or Gorenstein or even (weak) Fano: this is done in \S~\ref{sez:Q-Fano} and it also holds for every 1-covering of $X$. The conclusive \S~\ref{sez:esempi} is devoted to check when examples, whose geometric structure has been studied in \cite{RT-Qfproj},  give Gorenstein and (weak) $\Q$--Fano varieties, eventually up to a toric flip.

\section{Preliminaries and notation}
In the present paper we deal with $\Q$--factorial complete toric varieties associated with simplicial and complete fans. For preliminaries and used notation on toric varieties we refer the reader to \cite[\S~1.1]{RT-LA&GD}. We will also apply $\Z$--linear Gale duality as developed in \cite[\S~3]{RT-LA&GD}.
Every time the needed nomenclature will be recalled either directly by giving the necessary definition or by reporting the precise reference. Here is a list of main notation and relative references:

\subsection{List of notation}\label{ssez:lista}\hfill\\
Let $X(\Si)$ be a $n$--dimensional toric variety and $T\cong(\C^*)^n$ the acting torus, then
\begin{eqnarray*}
% \nonumber to remove numbering (before each equation)
  &M,N,M_{\R},N_{\R}& \text{denote the \emph{group of characters} of $T$, its dual group and}\\
  && \text{their tensor products with $\R$, respectively;} \\
  &\Si\subseteq \frak{P}(N_{\R})& \text{is the fan defining $X$, where $\frak{P}(N_{\R})$ is the power set of $N_{\R}$} \\
  &\Si(i)& \text{is the \emph{$i$--skeleton} of $\Si$, which is the collection of all the}\\
  && \text{$i$--dimensional cones in $\Si$;} \\
  &|\Si|& \text{is the \emph{support} of the fan $\Si$, i.e.}\ |\Si|=\bigcup_{\s\in\Si}\s \subseteq N_{\R};\\
  &r=\rk(X)&\text{is the Picard number of $X$, also called the \emph{rank} of $X$};\\
  &F^r_{\R}& \cong\R^r,\ \text{is the $\R$--linear span of the free part of $\Cl(X(\Si))$};\\
  &F^r_+& \text{is the positive orthant of}\ F^r_{\R}\cong\R^r;\\
  &\langle\v_1,\ldots,\v_s\rangle\subseteq N_{\R}& \text{denotes the cone generated by the vectors $\v_1,\ldots,\v_s\in N_{\R}$;}\\
  && \text{if $s=1$ then this cone is also called the \emph{ray} generated by $\v_1$;} \\
  &\mathcal{L}(\v_1,\ldots,\v_s)\subseteq N& \text{denotes the sublattice spanned by $\v_1,\ldots,\v_s\in N$\,.}
\end{eqnarray*}
Let $A\in\mathbf{M}(d,m;\Z)$ be a $d\times m$ integer matrix, then
\begin{eqnarray*}
% \nonumber to remove numbering (before each equation)
  &\mathcal{L}_r(A)\subseteq\Z^m& \text{denotes the sublattice spanned by the rows of $A$;} \\
  &\mathcal{L}_c(A)\subseteq\Z^d& \text{denotes the sublattice spanned by the columns of $A$;} \\
  &A_I\,,\,A^I& \text{$\forall\,I\subseteq\{1,\ldots,m\}$ the former is the submatrix of $A$ given by}\\
  && \text{the columns indexed by $I$ and the latter is the submatrix of}\\
  && \text{$A$ whose columns are indexed by the complementary }\\
  && \text{subset $\{1,\ldots,m\}\backslash I$;} \\
  &\REF& \text{Row Echelon Form of a matrix;}\\
  &\text{\emph{positive}}\ (\geq 0)& \text{a matrix (vector) whose entries are non-negative.}\\
  &\text{\emph{strictly positive}}\ (> 0)& \text{a matrix (vector) whose entries are strictly positive.}
\end{eqnarray*}
Given a $F$--matrix $V=(\v_1,\ldots,\v_{n+r})\in\mathbf{M}(n,n+r;\Z)$ (see Definition \ref{def:Fmatrice} below), then
\begin{eqnarray*}
% \nonumber to remove numbering (before each equation)
  &\langle V\rangle& =\langle\v_1,\ldots,\v_{n+r}\rangle\subseteq N_{\R}\ \text{denotes the cone generated by the columns of $V$;} \\
  &\SF(V)& =\SF(\v_1,\ldots,\v_{n+r})\ \text{is the set of all rational simplicial fans $\Si$ such that}\\
  && \text{$\Sigma(1)=\{\langle\v_1\rangle,\ldots,\langle\v_{n+r}\rangle\}\subset N_{\R}$ \cite[Def.~1.3]{RT-LA&GD};}\\
  &\P\SF(V)&:=\{\Si\in\SF(V)\ |\ X(\Si)\ \text{is projective}\};\\
  &\G(V)&=Q\ \text{is a \emph{Gale dual} matrix of $V$ (see either Definition \ref{def:Galeduale} or \cite[\S~2.1]{RT-LA&GD})} \\
  &\CQ&=\langle\G(V)\rangle \subseteq F^r_+\ \text{is a \emph{Gale dual cone} of}\ \langle V\rangle:\ \text{it is always assumed to be}\\
  && \text{generated in $F^r_{\R}$ by the columns of a positive $\REF$ matrix $Q=\G(V)$}\\
  && \text{(see \cite[Thms.~2.8,~2.19]{RT-LA&GD})}.
\end{eqnarray*}
\oneline
\noindent Let us start by recalling the $\Z$--linear interpretation of Gale duality and the se\-con\-da\-ry fan following the lines of \cite[\S~2]{RT-LA&GD} and \cite[\S~1.2]{RT-Qfproj}, to which the interested reader is referred for any further detail.

\begin{definition}\label{def:Galeduale} Let $V$ be a $n\times (n+r)$ integer matrix of rank $n$. If we think $V$ as a linear application from $\Z^{n+ r}$ to $\Z^n$ then $\ker(V)$ is a lattice in $\Z^{n+ r}$, of rank $r$ and without cotorsion. We shall denote ${\mathcal G}(V)$ the \emph{Gale dual matrix of $A$}, which is an integral $r\times (n+ r)$ matrix $Q$ such that ${\mathcal L}_r(Q)=\ker(V)$.
\end{definition}
A Gale dual matrix $Q=\G(V)$ is well-defined up to left multiplication by $\GL_r(\Z)$.
Notice that ${\mathcal L}_r(Q)={\mathcal L}_r(V)^\perp$ w.r.t. the standard inner product in $\R^{n+r}$, and that $Q\cdot V^T=0$; moreover $Q$ can be characterized by the following
universal property:
\begin{equation*}
\forall\,A\in \mathbf{M}(r,r;\Z)\quad A\cdot V^T=0\ \Rightarrow\ \exists\,\alpha\in\mathbf{M}(r,r;\Z)\,:\,A=\alpha\cdot Q
\end{equation*}

\begin{definition}\label{def:matrice fan} A $n$--dimensional $\Q$--factorial complete toric variety $X=X(\Si)$ of rank $r$ is the toric variety defined by a $n$--dimensional \emph{simplicial} and \emph{complete} fan $\Si$ such that $\sharp(\Si(1))=n+r$ \cite[\S~1.1.2]{RT-LA&GD}. In particular the rank $r$ coincides with the Picard number i.e. $r=\rk(\Pic(X))$. A matrix $V\in\M(n,n+r;\Z)$, whose columns are given by non zero elements of the monoid $\rho\cap N$, one for every $\rho\in\Si(1)$, is called a \emph{fan matrix of $X$}. If every such element is actually a generator of the associated monoid $\rho\cap N$, i.e. it is a \emph{primitive} element of the ray $\rho$, then $V$ is called a \emph{reduced} fan matrix.
\end{definition}

\begin{definition}\cite[Def.~3.10]{RT-LA&GD}\label{def:Fmatrice} An \emph{$F$--matrix} is a $n\times (n+r)$ matrix  $V$ with integer entries, satisfying the conditions:
\begin{itemize}
\item[a)] $\rk(V)=n$;
\item[b)] $V$ is \emph{$F$--complete} i.e. $\langle V\rangle=N_{\R}\cong\R^n$ \cite[Def.~3.4]{RT-LA&GD};
\item[c)] all the columns of $V$ are non zero;
\item[d)] if ${\bf  v}$ is a column of $V$, then $V$ does not contain another column of the form $\lambda  {\bf  v}$ where $\lambda>0$ is real number.
\end{itemize}
A \emph{$CF$--matrix} is a $F$-matrix satisfying the further requirement
\begin{itemize}
\item[e)] the sublattice ${\mathcal L}_c(V)\subset\Z^n$ is cotorsion free, which is ${\mathcal L}_c(V)=\Z^n$ or, equivalently, ${\mathcal L}_r(V)\subset\Z^{n+r}$ is cotorsion free.
\end{itemize}
A $F$--matrix $V$ is called \emph{reduced} if every column of $V$ is composed by coprime entries \cite[Def.~3.13]{RT-LA&GD}. A (reduced) fan matrix of a $\Q$--factorial complete toric variety $X(\Si)$ is always a (reduced) $F$--matrix.
\end{definition}

\begin{definition}\cite[Def.~3.9]{RT-LA&GD}\label{def:Wmatrix} A \emph{$W$--matrix} is an $r\times (n+r)$ matrix $Q$  with integer entries, satisfying the following conditions:
\begin{itemize}
\item[a)] $\rk(Q)=r$;
\item[b)] ${\mathcal L}_r(Q)$ has not cotorsion in $\Z^{n+r}$;
\item[c)] $Q$ is \emph{$W$--positive}, which is $\mathcal{L}_r(Q)$ admits a basis consisting of positive vectors (see list \ref{ssez:lista} and \cite[Def.~3.4]{RT-LA&GD}).
\item[d)] Every column of $Q$ is non-zero.
\item[e)] ${\mathcal L}_r(Q)$   does not contain vectors of the form $(0,\ldots,0,1,0,\ldots,0)$.
\item[f)]  ${\mathcal L}_r(Q)$ does not contain vectors of the form $(0,a,0,\ldots,0,b,0,\ldots,0)$, with $ab<0$.
\end{itemize}
A $W$--matrix is called \emph{reduced} if $V=\G(Q)$ is a reduced $F$--matrix \cite[Def.~3.14, Thm.~3.15]{RT-LA&GD}.

\noindent The Gale dual matrix $Q=\G(V)$ of a fan matrix $V$ of a $\Q$--factorial complete toric variety $X(\Si)$ is a $W$--matrix called  a \emph{weight matrix of $X$}.
\end{definition}

\begin{definition}\cite[Def.~2.7]{RT-LA&GD}\label{def:PWS} A \emph{poly weighted space} (PWS) is a $n$--dimensional $\Q$--factorial complete toric variety $Y(\widehat{\Si})$ of rank $r$, whose reduced fan matrix is a $CF$--matrix i.e. if
\begin{itemize}
  \item $\widehat{V}$ is a $n\times(n+r)$ $CF$--matrix,
  \item $\widehat{\Si}\in\SF(\widehat{V})$.
\end{itemize}
Let us recall that a $\Q$--factorial complete toric variety $Y$ is a PWS if and only if it is \emph{1-connected in codimension 1} \cite[\S~3]{Buczynska}, which is $\pi^1_1(Y)\cong\Tors(\Cl(Y))=0$ \cite[Thm.~2.1]{RT-QUOT}.
\end{definition}

\begin{definition}\label{def:GKZ&Mov}\cite[Def.~1.6]{RT-Qfproj} Let $\mathcal{S}_r$ be the family of all the $r$--dimensional subcones of $\CQ$ obtained as intersection of simplicial subcones of $\CQ$. Then define the \emph{secondary fan} (or \emph{GKZ decomposition}) of $V$ to be the set $\Ga=\Ga(V)$ of cones in $F^r_+$ such that
\begin{itemize}
  \item its subset of $r$--dimensional cones (the \emph{$r$--skeleton}) $\Ga(r)$ is composed by the minimal elements, with respect to the inclusion, of the family $\mathcal{S}_r$,
  \item its subset of $i$--dimensional cones (the \emph{$i$--skeleton}) $\Ga(i)$ is composed by all the $i$--dimensional faces of cones in $\Ga(r)$, for every $1\leq i\leq r-1$.
\end{itemize}
 A maximal cone $\g\in\Ga(r)$ is called a \emph{chamber} of the secondary fan $\Ga$. Finally define
\begin{equation}\label{Mov}
    \Mov(V):= \bigcap_{i=1}^{n+r}\left\langle Q^{\{i\}}\right\rangle\ ,
\end{equation}
where $\left\langle Q^{\{i\}}\right\rangle$ is the cone generated in $F^r_+$ by the columns of the submatrix $Q^{\{i\}}$ of $Q$ (see the list of notation \ref{ssez:lista}).
\end{definition}

The fact that $\Ga$ is actually a fan is a consequence of the following

\begin{theorem}\label{thm:GKZ}
If $V$ is a $F$--matrix then, for every $\Si\in\SF(V)$,
\begin{enumerate}
  \item $\CQ=\overline{\Eff}(X(\Si))$, the \emph{pseudo--effective cone of $X$}, which is the closure of the cone generated by effective Cartier divisor classes of $X$ \cite[Lemma~15.1.8]{CLS},
  \item $\Mov(V)=\overline{\Mov}(X(\Si))$, the closure of the cone generated by movable Cartier divisor classes of $X$ \cite[(15.1.5), (15.1.7), Thm.~15.1.10, Prop.~15.2.4]{CLS}.
  \item $\Ga(V)$ is the secondary fan (or GKZ decomposition) of $X(\Si)$ \cite[\S~15.2]{CLS}. In particular $\Ga$ is a fan and $|\Ga|=\CQ\subset F^r_+$.
\end{enumerate}
\end{theorem}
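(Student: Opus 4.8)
The plan is to reduce each of the three statements to the corresponding classical facts about complete toric varieties collected in \cite[\S~15.1,~15.2]{CLS}, the only genuine work being the translation of those facts into the $\Z$--linear Gale duality language. The starting observation is that, by Gale duality, the columns of $Q=\G(V)$ represent in $F^r_{\R}$ the classes $[D_1],\ldots,[D_{n+r}]$ of the torus--invariant prime divisors, via the divisor class sequence attached to $V$ (see \cite[\S~3]{RT-LA&GD}). Since $Q$ is built from $V$ alone, all of $\Cl(X)$, the classes $[D_i]$, and hence the cones $\CQ$, $\Mov(V)$ and the decomposition $\Ga(V)$ depend only on $V$ and not on the chosen $\Si\in\SF(V)$; this already accounts for the uniformity ``for every $\Si\in\SF(V)$'' in the statement.

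For (1) I would recall that on a complete toric variety the pseudo--effective cone coincides with the (polyhedral, hence closed) cone generated by the classes of the prime invariant divisors \cite[Lemma~15.1.8]{CLS}. Under the identification above this is exactly $\langle Q\rangle=\CQ$, whence $\overline{\Eff}(X(\Si))=\CQ$. For (2) the movable cone of a complete toric variety is characterized as the intersection, over all rays, of the cones generated by the classes of the remaining prime divisors \cite[(15.1.5),(15.1.7),Thm.~15.1.10,Prop.~15.2.4]{CLS}; deleting the $i$--th ray corresponds precisely to passing to the submatrix $Q^{\{i\}}$, so this intersection is $\bigcap_{i=1}^{n+r}\langle Q^{\{i\}}\rangle=\Mov(V)$ by \eqref{Mov}.

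The substantial point is (3). The classical secondary fan of $X(\Si)$ is the GKZ decomposition of $\overline{\Eff}(X)=\CQ$ whose full--dimensional cones (chambers) are the nef cones $\Nef(X(\Si'))$ as $\Si'$ ranges over $\P\SF(V)$, and each such chamber is cut out as an intersection of the simplicial cones $\langle Q_I\rangle$ spanned by $r$--element subsets of columns of $Q$ \cite[\S~15.2]{CLS}. I would therefore match these chambers with the $r$--skeleton $\Ga(r)$ of Definition~\ref{def:GKZ&Mov}: a chamber is full dimensional and is a minimal nonempty intersection of full--dimensional simplicial subcones $\langle Q_I\rangle\subseteq\CQ$, which is exactly the description of the minimal elements of the family $\mathcal{S}_r$. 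Once the two families of $r$--dimensional cones are identified, the lower skeleta agree by passing to faces, and the assertion that $\Ga(V)$ is a genuine fan with $|\Ga(V)|=\CQ$ is inherited from the corresponding statement in \cite[\S~15.2]{CLS}.

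The main obstacle I expect is precisely this last identification: showing that ``minimal $r$--dimensional intersection of simplicial subcones of $\CQ$'' and ``nef cone of some projective $X(\Si')$'' describe the same cones, and that their relative interiors partition $\Relint(\CQ)$ so that the face--closure really yields a fan. This is where one must invoke the GIT/wall--crossing structure of \cite[\S~15.2]{CLS} rather than a purely formal manipulation; parts (1) and (2), by contrast, are immediate once the columns of $Q$ are recognized as the divisor classes $[D_i]$.
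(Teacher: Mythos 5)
Your proposal is correct and matches the paper's approach exactly: the paper gives no independent proof of Theorem~\ref{thm:GKZ}, asserting it precisely as a translation of the cited results in \cite[\S\S~15.1--15.2]{CLS} into the Gale-dual language, with the columns of $Q=\G(V)$ identified with the classes $[D_i]$ — which is the reduction you carry out. Your explicit flagging of the matching in part (3) between minimal $r$--dimensional intersections of simplicial subcones and the nef chambers of \cite[\S~15.2]{CLS} is a fair articulation of the one nontrivial identification the paper leaves implicit.
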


\begin{theorem}\emph{\cite[Prop. 15.2.1]{CLS}}\label{thm:Gale sui coni} There is a one to one correspondence between the following sets
\begin{eqnarray*}
    \mathcal{A}_{\Ga}(V)&:=&\{\g\in\Ga(r)\ |\ \g\subset\Mov(V)\}\\
    \P\SF(V)&:=&\{\Si\in\SF(V)\ |\ X(\Si)\ \text{is projective}\}\ .
\end{eqnarray*}
\end{theorem}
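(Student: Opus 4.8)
The plan is to establish the bijection between $\mathcal{A}_\Ga(V)$ and $\P\SF(V)$ by sending each projective simplicial fan to its K\"{a}hler cone inside $F^r_\R$. First I would recall that, for any $\Si\in\SF(V)$, the classes of ample (respectively nef) divisors generate an open (respectively closed) cone $\Nef(X(\Si))\subseteq F^r_+$, and that projectivity of $X(\Si)$ is precisely the condition that this nef cone be full-dimensional, i.e. of dimension $r$. The assignment $\Si\mapsto\Nef(X(\Si))$ is therefore the natural candidate for the correspondence, and the content of the theorem is that its image is exactly the set of chambers lying inside $\Mov(V)$.

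The key geometric input is the toric description of ampleness via support functions. Given $\Si\in\SF(V)$, a divisor class $D$ is ample iff its associated support function $\varphi_D$ is strictly convex with respect to the fan $\Si$; this forces $\Nef(X(\Si))$ to be one of the chambers $\g\in\Ga(r)$ of the GKZ decomposition, by the very construction of $\Ga$ in Definition \ref{def:GKZ&Mov} as the common refinement recording where the linear-programming problem defining $\varphi_D$ changes its combinatorial type. Conversely, each chamber $\g\in\Ga(r)$ determines a unique simplicial fan refining the data of $V$ for which $\g$ is the nef cone, namely the fan whose maximal cones are dual to the faces of the polytope cut out by a class in the interior of $\g$. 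Thus $\Si\leftrightarrow\g=\Nef(X(\Si))$ gives a bijection between $\P\SF(V)$ and the set of chambers $\g$ that actually arise as nef cones of projective fans.

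The remaining point — and here I would invoke Theorem \ref{thm:GKZ} — is to identify which chambers arise this way as precisely those contained in $\Mov(V)$. By part (2) of that theorem, $\Mov(V)=\overline{\Mov}(X(\Si))$ is the closure of the movable cone, and the classical fact is that a big and movable class is nef on some projective small modification: a chamber $\g$ supports a genuine fan structure on the rays of $V$ exactly when its interior consists of movable classes, equivalently when $\g\subseteq\Mov(V)$. Chambers meeting the boundary of $\Mov(V)$ but not contained in it correspond to contractions rather than to fans in $\SF(V)$, so they are excluded. Combining this with the bijection of the previous paragraph yields the stated one-to-one correspondence.

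The main obstacle I expect is the careful bookkeeping in the converse direction: showing that every chamber $\g\in\mathcal{A}_\Ga(V)$ is realized by a genuine member of $\SF(V)$ (and not merely by a coarser or non-simplicial fan), and that the fan so produced has its one-skeleton equal to the rays spanned by the columns of $V$. This is where the hypothesis that $V$ is an $F$--matrix, together with the $W$--positivity of $Q=\G(V)$, must be used to guarantee that the chamber data translate back into a complete simplicial fan on exactly the prescribed rays. Since this is essentially the toric GKZ machinery of \cite[\S~15.2]{CLS}, I would cite Proposition 15.2.1 there for the precise statement and limit the argument to verifying that our $\Z$--linear Gale-dual setup matches their hypotheses.
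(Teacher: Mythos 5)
Your proposal is correct in substance, and it is worth noting that the paper itself offers no independent proof of Theorem \ref{thm:Gale sui coni}: the statement is quoted from \cite[Prop.~15.2.1]{CLS}, after which the paper merely records the correspondence explicitly. You and the paper thus both defer the hard part to the same source, but you describe the bijection along genuinely different lines. You work on the nef-cone side: $\Si\mapsto\Nef(X(\Si))$, projectivity as full-dimensionality of the nef cone, strict convexity of support functions, and recovery of $\Si_{\g}$ as the normal fan of the polytope of a class interior to $\g$ --- which is essentially the proof mechanism inside \cite[\S~15.2]{CLS}. The paper instead packages the correspondence Gale-dually, following Berchtold--Hausen: to a chamber $\g$ it attaches the bunch $\mathcal{B}(\g)=\left\{\left\langle Q_J\right\rangle : |J|=r,\ \det\left(Q_J\right)\neq 0,\ \g\subset\left\langle Q_J\right\rangle\right\}$, sets $\Si_{\g}(n)=\left\{\left\langle V^J\right\rangle : \left\langle Q_J\right\rangle\in\mathcal{B}(\g)\right\}$, and conversely defines $\g_{\Si}$ as the intersection of the cones in $\mathcal{B}_{\Si}$. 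The two descriptions are equivalent, but the bunch formulation is what the paper actually exploits downstream: the criterion of Theorem \ref{thm:criterio} is read off directly from $\g_{\Si}=\bigcap_{I\in\mathcal{I}_{\Si}}\left\langle Q_I\right\rangle$ together with the rows of $Q_I^{-1}$ as inward normals, data which your polytope/normal-fan description does not directly expose. The obstacle you flag (that every chamber inside $\Mov(V)$ is realized by a simplicial complete fan whose $1$--skeleton consists of exactly the rays of $V$) is the right one, and deferring it to \cite[Prop.~15.2.1]{CLS} is precisely what the paper does; only note that your appeal to ``big and movable classes become nef on a small modification'' is MMP language that, in the toric setting, is subsumed by that same proposition rather than serving as an independent ingredient.
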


For the following it is useful to understand the construction of such a correspondence. Namely (compare \cite[Prop.~15.2.1]{CLS}):
\begin{itemize}
  \item after \cite{Berchtold-Hausen}, given a chamber $\g\in\mathcal{A}_{\Ga}$ let us call \emph{the bunch of cones of $\g$} the collection of cones in $F^r_+$ given by
      \begin{equation*}
        \mathcal{B}(\g):=\left\{\left\langle Q_J\right\rangle\ |\ J\subset\{1,\ldots,n+r\}, \sharp(J)=r, \det\left(Q_J\right)\neq 0, \g\subset\left\langle Q_J\right\rangle\right\}
      \end{equation*}
      (see also \cite[p.~738]{CLS}),
  \item it turns out that $\bigcap_{\b\in\mathcal{B}(\g)}\b=\g$\,,
  \item  for any $\g\in\mathcal{A}_{\Ga}(V)$ there exists a unique fan $\Si_{\g}\in\P\SF(V)$ such that
  \begin{equation*}
    \Si_{\g}(n):=\left\{\left\langle V^J\right\rangle\ |\ \left\langle Q_J\right\rangle\in \mathcal{B}(\g)\right\}\,,
  \end{equation*}
  \item for any $\Si\in\P\SF(V)$ the collection of cones
  \begin{equation}\label{bunch}
    \mathcal{B}_{\Si}:=\left\{\left\langle Q^I\right\rangle\ |\ \left\langle V_I\right\rangle\in\Si(n)\right\}
  \end{equation}
  is the bunch of cones of the chamber $\g_{\Si}\in\mathcal{A}_{\Ga}$ given by $\g_{\Si}:=\bigcap_{\b\in\mathcal{B}_{\Si}}\b$.
\end{itemize}
Then the correspondence in Theorem \ref{thm:Gale sui coni} is realized by setting
\begin{equation*}
\begin{array}{ccc}
  \mathcal{A}_{\Ga}(V) & \longleftrightarrow & \P\SF(V) \\
  \g & \longmapsto & \Si_{\g} \\
  \g_{\Si} & \longmapsfrom & \Si
\end{array}
\end{equation*}

Let us here recall the following standard notation for a normal complete variety $X$:
\begin{itemize}
  \item a Cartier divisor $D\subset X$ is called \emph{ample} if it admits a \emph{very ample} positive multiple $kD$ that is, the associated morphism $$\varphi_{kD}:X\rightarrow\P\left(H^0(X,\mathcal{O}_X(D))^{\vee}\right)$$
      is a closed embedding;
  \item a Cartier divisor $D\subset X$ is called \emph{numerically effective} (\emph{nef}) if $D\cdot C\geq 0$ for every irreducible complete curve $C\subset X$
  \item $\Nef(X)$ is the cone generated by nef divisors
\end{itemize}
\begin{remark}\label{rem:ampio}In the toric setting nef divisors are especially easy to understand: in particular, if $X$ is a $\Q$-factorial complete toric variety then being nef for $D$ is equivalent to one of the following: (a) $D$ is basepoint free, (b) $\mathcal{O}_X(D)$ is generated by global sections, (c) $D\cdot C\geq 0$ for every torus-invariant, irreducible, complete curve $C\subset X$ \cite[Thm.~6.3.12]{CLS}.
\end{remark}

As a final result let us recall the following
\begin{proposition}\emph{\cite[Thm. 15.1.10(c)]{CLS}}\label{prop:nef} If $V=(\v_1\ldots,\v_{n+r})$ is a $F$--matrix then, for every fan $\Si\in\P\SF(V)$, there is a na\-tu\-ral isomorphism $\Pic(X(\Si))\otimes\R\cong F^r_{\R}$ taking the cones
\begin{equation*}
    \Nef(X(\Si))\subseteq\overline{\Mov}(X(\Si))\subseteq\overline{\Eff}(X(\Si))
\end{equation*}
to the cones
\begin{equation*}
    \g_{\Si}\subseteq\Mov(V)\subseteq\CQ\,.
\end{equation*}
In particular, calling $d:\mathcal{W}_T(X(\Si))\to\Cl(X(\Si))$  the morphism giving to a torus invariant divisor $D$ its linear equivalence class $d(D)$, we get that a Weil divisor $D$ on $X(\Si)$ admits a nef (ample) positive multiple if and only if $d(D)\in\g_{\Si}$ (\,$d(D)$ is an interior point of $\g_{\Si}$).
 \end{proposition}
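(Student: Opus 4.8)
The plan is to read the statement directly off the Gale dual class sequence and then invoke the cited results of \cite{CLS}, translated into the $\Z$--linear language of the preliminaries. First I would write down the divisor class sequence of $X(\Si)$, tensored with $\R$,
\begin{equation*}
0\longrightarrow M_\R\xrightarrow{\;V^T\;}\R^{n+r}\xrightarrow{\;Q\;}\Cl(X(\Si))\otimes\R\longrightarrow 0,
\end{equation*}
where $\mathcal{W}_T(X(\Si))\otimes\R\cong\R^{n+r}$ is the space of torus--invariant $\R$--divisors, the left map is given by the transposed fan matrix (its rank being $n=\dim M_\R$, it is injective), and the right map is the class morphism $d$, represented by a weight matrix $Q=\G(V)$. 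Exactness here is just the identity $\im(V^T)=\ker(Q)$ coming from Gale duality. By construction the $j$--th column of $Q$ is the class $d(D_j)$ of the $j$--th torus invariant prime divisor, so the right map identifies $\Cl(X(\Si))\otimes\R$ with $F^r_\R$, the columns of $Q$ spanning $\CQ$. Since $X(\Si)$ is $\Q$--factorial, $\Pic(X(\Si))$ is a finite--index subgroup of $\Cl(X(\Si))$, whence $\Pic(X(\Si))\otimes\R=\Cl(X(\Si))\otimes\R\cong F^r_\R$: as $d$ is canonical, this is the asserted natural isomorphism.

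Next I would match the three cones. The outer two identifications are already at hand: $\overline{\Eff}(X(\Si))=\CQ$ and $\overline{\Mov}(X(\Si))=\Mov(V)$ are exactly Theorem \ref{thm:GKZ}(1) and (2). For the inner cone I would appeal to \cite[Thm.~15.1.10(c)]{CLS}, which presents $\Nef(X(\Si))$ as the chamber of the secondary fan associated with $\Si$; under the bunch/chamber dictionary recalled after Theorem \ref{thm:Gale sui coni}, that chamber is precisely $\g_\Si=\bigcap_{\b\in\mathcal{B}_\Si}\b$ with $\mathcal{B}_\Si=\{\langle Q^I\rangle\mid\langle V_I\rangle\in\Si(n)\}$. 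Thus the isomorphism carries the nested triple $\Nef\subseteq\overline{\Mov}\subseteq\overline{\Eff}$ onto $\g_\Si\subseteq\Mov(V)\subseteq\CQ$; and since $X(\Si)$ is projective the cone $\g_\Si$ is full dimensional, i.e. a genuine chamber $\g_\Si\in\Ga(r)$.

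It then remains to pass from Cartier to Weil divisors to obtain the ``in particular'' clause. Given a Weil divisor $D$, $\Q$--factoriality furnishes $k>0$ with $kD$ Cartier, and $d(kD)=k\,d(D)$ (read in $F^r_\R$ after killing torsion). Because $\g_\Si$ is a cone, $k\,d(D)$ lies in $\g_\Si$, respectively in its interior, if and only if $d(D)$ does; combined with the previous paragraph and Remark \ref{rem:ampio} this shows that $kD$ is nef exactly when $d(D)\in\g_\Si$. For ampleness I would use the toric characterization by which, on the projective variety $X(\Si)$, a Cartier divisor is ample precisely when its class lies in the interior of $\Nef(X(\Si))$ \cite{CLS}; transporting this through the isomorphism and using that the full dimensional cone $\g_\Si$ has interior equal to its relative interior gives that a positive multiple of $D$ is ample if and only if $d(D)$ is an interior point of $\g_\Si$.

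The only genuinely delicate point, and the step I would treat most carefully, is the identification $\Nef(X(\Si))=\g_\Si$: everything else is either the class sequence or a homogeneity argument on cones. Here one must check that the description of the nef cone in \cite{CLS} by means of strictly convex piecewise linear support functions coincides, chamber by chamber, with the purely numerical intersection $\bigcap_{\b\in\mathcal{B}_\Si}\b$ of the Gale dual cones $\langle Q^I\rangle$ indexed by the maximal cones $\langle V_I\rangle\in\Si(n)$. This is exactly the content already built into Theorem \ref{thm:Gale sui coni} and the explicit construction following it (itself resting on \cite{Berchtold-Hausen}), so in practice the obstacle is reduced to quoting that correspondence correctly rather than reproving it.
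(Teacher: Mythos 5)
Your proposal is correct and takes essentially the same route as the paper, which in fact states Proposition \ref{prop:nef} without proof as a direct citation of \cite[Thm.~15.1.10(c)]{CLS} and \cite[Prop.~15.2.1]{CLS}: you have simply made explicit the ingredients the paper already has in place, namely the class sequence (\ref{div-sequence}), Theorem \ref{thm:GKZ}(1)--(2) for the outer cones, and the bunch/chamber dictionary of Theorem \ref{thm:Gale sui coni} for $\Nef(X(\Si))=\g_{\Si}$, together with the routine $\Q$--factoriality and cone--homogeneity arguments (and the fullness of the chamber for the ample case) needed for the ``in particular'' clause.
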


 \begin{remark}\label{rem:nef} Let us notice that, for every fan $\Si\in\SF(V)$, the set of cones $\mathcal{B}_{\Si}$ defined in (\ref{bunch}), still gives a bunch of cones in the sense of Berchtold and Hausen \cite{Berchtold-Hausen}. Moreover, under our hypothesis, one still has that
 \begin{equation*}
   \Nef(X(\Si))=\bigcap_{\beta\in\mathcal{B}_{\Si}} \beta
 \end{equation*}
 (see either \cite[Thm.~9.2]{Berchtold-Hausen} or \cite[Prop.~15.1.3 (b), Prop.~15.2.1 (c)]{CLS}). \\
 In particular $\Si\in\P\SF(V)$ if and only $\Nef(X(\Si))$ is a full dimensional subcone of $\CQ=\overline{\Eff}(X(\Si))$.
 \end{remark}

\section{A numerical ampleness criterion}
Let $V=(\v_1,\ldots,\v_{n+r})$ be a $n\times(n+r)$ reduced $F$--matrix and $Q=\G(V)=(\q_1,\ldots,\q_{n+r})$ be a Gale dual REF positive $W$--matrix. Let $\Si\in\SF(V)$ and consider the $\Q$--factorial complete toric variety $X(\Si)$. Recall that, calling $\mathcal{W}_T(X)$ the $\Z$--module of torus invariant Weil divisors, the transposed matrix $V^T$ and the Gale dual matrix $Q=\G(V)$ are representative matrices of the morphisms giving the following standard exact sequence
 \begin{equation}\label{div-sequence}
    \xymatrix{0 \ar[r] & M\otimes\Q\ar[r]^-{div}_-{V^T}& \mathcal{W}_T(X)\otimes\Q
\ar[r]^-{d}_-Q & \Cl(X)\otimes\Q \ar[r]& 0}
\end{equation}
with respect to the standard basis of $\mathcal{W}_T(X)$ given by the torus invariant divisors $\{D_j\}_{j=1}^{n+r}$, where $D_j$ is the closure of the torus orbit of the ray $\langle\v_j\rangle$. Under the identification $\mathcal{W}_T(X)=\Z^{n+r}$ a divisor $D=\sum_j a_jD_j$ will be identified with the column vector $\aa=\left(a_1,\ldots,a_{n+r}\right)^T$. Consequently the free part of the linear equivalence class $d(D)\in F^r\subseteq\Cl(X)$ will be identified with the column $r$-vector $Q\cdot\aa$. The least positive integer $k\geq 1$ such that $kD$ is Cartier will be called the \emph{Cartier index of $D$} and denoted by $c(D)$.

Then setting
\begin{equation*}
  \mathcal{I}_\Si=\{I\subset\{1,\ldots,n+r\}\,|\,\left\langle V^I\right\rangle\in\Si(n)\}=\{I\subset\{1,\ldots,n+r\}\,|\,\left\langle Q_I\right\rangle\in\mathcal{B}(\g_{\Si})\}
\end{equation*}
one gets the following numerical ampleness criterion

\begin{theorem}\label{thm:criterio} Let $X(\Si)$ be a $\Q$-factorial complete toric variety. A Weil divisor $D=\sum_j a_jD_j\in\mathcal{W}_T(X)$ admits a positive multiple which is a nef (resp. ample) divisor if and only if for every $I\in\mathcal{I}_{\Si}$ the column vector
        \begin{equation}\label{ampiezza}
          \aa_I:=Q_I^{-1}\cdot d(D) =Q_I^{-1}\cdot Q\cdot \aa \geq 0\quad  (\text{resp.}\ >0)
       \end{equation}
i.e. it has positive (strictly positive) entries.

\noindent In particular, if $\Si\in\P\SF(V)$ and the chamber $\g_{\Si}\subseteq\Mov(V)$ is a $r$--di\-men\-sio\-nal simplicial cone, then its generators determine a maximal rank $r\times r$ matrix $G$ and condition $(\ref{ampiezza})$ admits the following shortcut:
\begin{equation}\label{ampiezza2}
  \aa_{\Si}=G^{-1}\cdot d(D) =G^{-1}\cdot Q\cdot \aa \geq 0\quad(\text{resp.}\ >0)\,.
\end{equation}
Moreover the Cartier index of $D$ is given by the least common multiple of denominators in either $\aa_I$, for every $I\in\I_{\Si}$, or $\aa_{\Si}$, respectively. In particular $D$ is a nef (resp. ample) divisor if and only if either $(\ref{ampiezza})$ or $(\ref{ampiezza2})$ hold and either $\aa_I$, for every $I\in\I_{\Si}$, or $\aa_{\Si}$, respectively, have integer entries.
\end{theorem}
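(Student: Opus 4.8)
The plan is to reduce the whole statement to Proposition \ref{prop:nef}, which already translates the existence of a nef (ample) positive multiple of $D$ into the membership $d(D)\in\g_{\Si}$ (into $d(D)$ being interior to $\g_{\Si}$), and then to make this membership fully explicit through the bunch $\mathcal{B}(\g_{\Si})$. First I would recall, from the discussion following Theorem \ref{thm:Gale sui coni}, the identity $\g_{\Si}=\bigcap_{\b\in\mathcal{B}(\g_{\Si})}\b=\bigcap_{I\in\I_{\Si}}\left\langle Q_I\right\rangle$, where each $Q_I$ is by definition of $\mathcal{B}(\g_{\Si})$ a nonsingular $r\times r$ matrix. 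Since the columns of such a $Q_I$ form a basis of $F^r_{\R}$, the cone $\left\langle Q_I\right\rangle$ is simplicial and full dimensional, so that for every $w\in F^r_{\R}$ one has $w\in\left\langle Q_I\right\rangle$ if and only if $Q_I^{-1}w\geq\0$, and $w$ is interior to $\left\langle Q_I\right\rangle$ if and only if $Q_I^{-1}w>\0$.

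Next I would intersect these conditions. Because $\g_{\Si}\in\Ga(r)$ is full dimensional and coincides with the finite intersection $\bigcap_{I\in\I_{\Si}}\left\langle Q_I\right\rangle$, an interior point of $\g_{\Si}$ is automatically interior to each (larger) cone $\left\langle Q_I\right\rangle$, so the standard fact on finite intersections of full dimensional convex cones gives $\Relint(\g_{\Si})=\bigcap_{I\in\I_{\Si}}\Relint\left\langle Q_I\right\rangle$. Setting $w=d(D)=Q\cdot\aa$ and $\aa_I=Q_I^{-1}\cdot Q\cdot\aa$, this shows that $d(D)\in\g_{\Si}$ (resp.\ $d(D)\in\Relint(\g_{\Si})$) if and only if $\aa_I\geq\0$ (resp.\ $\aa_I>\0$) for every $I\in\I_{\Si}$; together with Proposition \ref{prop:nef} this is exactly (\ref{ampiezza}). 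When $\g_{\Si}$ is itself simplicial I would write $\g_{\Si}=\langle G\rangle$ with $G$ the maximal rank $r\times r$ matrix of its generators and apply the same membership test directly to $G$, obtaining the shortcut (\ref{ampiezza2}).

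For the Cartier index I would argue chart by chart. Since $\Si$ is simplicial, $kD$ is Cartier precisely when it is Cartier on every $U_{\s}$ with $\s=\left\langle V^I\right\rangle\in\Si(n)$, $I\in\I_{\Si}$; as $\Pic(U_{\s})=0$, this holds if and only if the local character $\m_I:=\bigl((V^I)^T\bigr)^{-1}\cdot\aa^I\in M_{\Q}$ satisfies $k\,\m_I\in M$, so that $c(D)$ is the least common multiple over $I\in\I_{\Si}$ of the denominators of the entries of $\m_I$. To rewrite this in terms of $\aa_I$ I would invoke Gale duality: exactness of (\ref{div-sequence}) gives $Q\cdot V^T=\0$, which in block form reads $Q_I(V_I)^T+Q^I(V^I)^T=\0$, hence $Q_I^{-1}Q^I=-(V_I)^T\bigl((V^I)^T\bigr)^{-1}$ and therefore
\[
 \aa_I=Q_I^{-1}Q\,\aa=\bigl(\text{entries of }\aa\text{ indexed by }I\bigr)-(V_I)^T\,\m_I .
\]
Since the first summand is integral and $(V_I)^T$ is an integer matrix, the denominator of $\aa_I$ divides that of $\m_I$; taking least common multiples and adding the integrality requirement of (\ref{ampiezza}) then yields the final characterization of nef (ample) divisors.

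The main obstacle is the reverse divisibility in the last step, i.e.\ that the denominator of $\m_I$ in turn divides that of $\aa_I$, so that the two denominators actually \emph{coincide}. From the displayed identity together with $(V^I)^T\m_I=\aa^I\in\Z^n$, clearing the denominator of $\aa_I$ makes $V^T(k\,\m_I)$ integral, and one must deduce $k\,\m_I\in M$; this is precisely the implication ``$V^T\m\in\Z^{n+r}\Rightarrow\m\in M$'', which reflects the lattice generated by the columns of $V$. This is where I expect the reducedness of $V$ (and the cotorsion behaviour of $\mathcal{L}_c(V)$) to be needed, and I would treat it as the delicate point of the argument, the preceding cone-theoretic parts being formal.
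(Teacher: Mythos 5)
Your first half is correct and is essentially the paper's own argument: Proposition \ref{prop:nef} reduces the nef (ample) condition to $d(D)\in\g_{\Si}$ (resp.\ to $d(D)$ interior), the identity $\g_{\Si}=\bigcap_{I\in\I_{\Si}}\langle Q_I\rangle$ comes from the bunch $\mathcal{B}(\g_{\Si})$, and your membership test $Q_I^{-1}w\geq\0$ (resp.\ $>\0$) is exactly the paper's observation that the rows of $Q_I^{-1}$ are inward normal vectors to the facets of the simplicial cone $\langle Q_I\rangle$; your interior-versus-intersection bookkeeping and the simplicial shortcut via $G$ add nothing problematic. The divergence, and the gap, is in the Cartier index clause. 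There you argue by local characters, and the easy divisibility is fine: the block identity $Q_I(V_I)^T+Q^I(V^I)^T=\0$ and the resulting formula $\aa_I=\aa_{(I)}-(V_I)^T\m_I$ (where $\aa_{(I)}$ denotes the subvector of $\aa$ indexed by $I$) are correct. But the converse implication you flag as the delicate point, namely $V^T\m\in\Z^{n+r}\Rightarrow\m\in M$, is not merely delicate: it is \emph{equivalent} to $\Ls_c(V)=\Z^n$, that is to $V$ being a $CF$--matrix (condition (e) of Definition \ref{def:Fmatrice}), that is to $X$ being a PWS. Reducedness of $V$ (coprime columns) does not imply it, so the route you propose for closing the gap cannot work as stated.

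A concrete failure: take the reduced $F$--matrix $V=\begin{pmatrix}1&1&-2\\0&3&-3\end{pmatrix}$, so that $Q=\G(V)=(1\;1\;1)$ and $X$ is a fake $\P^2$ with $\Ls_c(V)=\Z\oplus 3\Z$ of index $3$ in $\Z^2$. The vector $\m=(0,1/3)$ satisfies $V^T\m=(0,1,-1)^T\in\Z^3$ but $\m\notin M$. For $D=D_1$ one has $Q\cdot\aa=1$ and $\aa_I=(1)$ for all three $I\in\I_{\Si}$, integral and strictly positive, whereas on the chart $\langle\v_1,\v_2\rangle$ the local character $\m_{\{3\}}=\bigl((V^{\{3\}})^T\bigr)^{-1}\aa^{\{3\}}=(1,-1/3)$ is not integral, and similarly on $\langle\v_1,\v_3\rangle$; hence $c(D_1)=3$, while the least common multiple of the denominators of the $\aa_I$ is $1$. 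So in the presence of torsion in $\Cl(X)$ the two denominators genuinely differ, and your chart-by-chart argument can only establish the index formula when $\Tors(\Cl(X))=0$. This is precisely the point where the paper's proof takes a different (and shorter) route: it quotes the identity $\Pic(X(\Si))=\bigcap_{I\in\I_{\Si}}\Ls_c(Q_I)$ from \cite[Thm.~3.2(2)]{RT-QUOT} --- a statement established there for a PWS, where the class group is free --- and concludes with the one-line remark that $Q\cdot\aa\in\Ls_c(Q_I)$ if and only if $\aa_I$ is integral. In other words, the weight you hoped reducedness would carry is exactly the weight the paper places on that citation; as the example shows, the lcm in Theorem \ref{thm:criterio} computes the Cartier index of the corresponding divisor on the universal $1$--covering PWS of $X$, which may be a proper divisor of $c(D)$ on $X$ itself. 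Your local computation has the merit of making the hidden hypothesis visible, but as a proof of the stated index clause it has a real hole, and it should either restrict to $V$ a $CF$--matrix or invoke the Picard-group identity as the paper does.
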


\begin{proof} Recall Proposition \ref{prop:nef} and Remark \ref{rem:nef}. In particular,  $\Nef(X(\Si))\subseteq\gkz$ and, if $\Si\in\P\SF(V)$ then $\Nef(X(\Si))$ is the chamber $\g_{\Si}$. Since $X$ is $\Q$--factorial, a suitable positive multiple of $D$ is a Cartier divisor. It is a nef divisor if and only if $Q\cdot\aa\in \Nef(X(\Si))$ and it is an ample divisor if and only if $Q\cdot\aa\in\g_{\Si}\setminus \partial \g_{\Si}$. Notice that $$\Nef(X(\Si))=\bigcap_{\b\in\mathcal{B}_{\Si}}\b=\bigcap_{I\in\mathcal{I}_{\Si}}\langle Q_I\rangle$$
and observe that the $i$--th row of the inverse matrix $Q_I^{-1}$ gives a (rational) inward normal vector to the facet of the simplicial cone $\langle Q_I\rangle$ determined by the $r-1$ columns of $Q_I$ different from the $i$--th one. The same argument holds for the matrix $G$ when it is a $r\times r$ maximal rank matrix, which is when the chamber $\g_{\Si}$ is a simplicial cone.

For the last part of the statement, recall that \cite[Thm.\,3.2(2)]{RT-QUOT}
$$\Pic(X(\Si))=\bigcap_{I\in\I_{\Si}}\Ls_c(Q_I)\,.$$
To ending up the proof, notice that $Q\cdot\aa\in\Ls_c(Q_I)$ if and only if $\aa_I$ has integer entries.
\end{proof}

\begin{remark} Let us recall that in the toric setting a weaker version of the Kleiman ampleness criterion is given by condition (c) in Remark \ref{rem:ampio} with strict inequalities (see e.g. \cite[Prop.~3.7]{Fujino}). In a sense the previous Theorem \ref{thm:criterio} is the Gale dual version of such a weaker Kleiman criterion.
\end{remark}

\section{A numerical characterization of (weak) $\Q$--Fano toric varieties}\label{sez:Q-Fano}
As an application of the previous Theorem \ref{thm:criterio} one easily gets a quite effective numerical characterization of Gorenstein and (weak) $\Q$--Fano toric varieties.

In this context a $\Q$--Fano toric variety is a $\Q$--factorial complete toric variety $X(\Si)$ whose (Weil) anti-canonical divisor $-K_X=\sum_{j=1}^{n+r} D_j$ admits an ample po\-si\-ti\-ve multiple $-mK_X$. Hence $X$ is projective. If $m=1$, i.e. $-K_X$ is ample, $X(\Si)$ is a Fano variety.

\noindent If $-K_X$ admits a nef positive multiple $-mK_X$ then $X$ is called a \emph{weak} $\Q$--Fano toric variety. As above, if $m=1$, i.e. $-K_X$ is nef, then $X$ is called a weak Fano toric variety. Clearly a weak $\Q$--Fano toric variety is  complete but not necessarily projective.

\noindent In particular $X$ is Gorenstein if $K_X$ is Cartier.

Let $V$ be a $n\times (n+r)$ reduced fan matrix of $X(\Si)$, i.e. $V$ is a reduced $F$--matrix and $\Si\in\P\SF(V)$. Let $Q=\G(V)$ be a Gale dual, positive, $\REF$ $W$--matrix of $X$.

\begin{theorem}\label{thm:Q-Fano}
  In the above notation the following are equivalent:
  \begin{enumerate}
    \item $X(\Si)$ is a $\Q$--Fano (resp. weak $\Q$--Fano) toric variety,
    \item for every $I\in\mathcal{I}_{\Si}$ the column vector
        \begin{equation*}
          \w_I:=Q_I^{-1}\cdot d(-K_X) =Q_I^{-1}\cdot Q\cdot \left(
              \begin{array}{c}
                1 \\
              \vdots \\
                1 \\
              \end{array}
            \right)>0\quad (\text{resp.}\ \geq 0)
       \end{equation*}
i.e. it has strictly positive (resp. non-negative) entries. In particular, if the chamber $\g_{\Si}$ is a $r$--dimensional simplicial cone whose generators determine the maximal rank $r\times r$ matrix $G$ then the previous condition admits the following shortcut:
\begin{equation*}
  \w_{\Si}=G^{-1}\cdot d(-K_X) =G^{-1}\cdot Q\cdot \left(
              \begin{array}{c}
                1 \\
              \vdots \\
                1 \\
              \end{array}
            \right)>0\quad (\text{resp.}\ \geq 0)
\end{equation*}
\end{enumerate}
Moreover $X$ is Gorenstein if and only if either $\w_I$, for every $I\in\I_{\Si}$, or $\w_{\Si}$, respectively, have integer entries and $X$ is Fano (resp. weak Fano) if and only if, in addition, also the previous conditions hold.
\end{theorem}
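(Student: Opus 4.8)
The plan is to deduce the entire statement as a direct specialization of Theorem \ref{thm:criterio} to the anticanonical Weil divisor $D=-K_X=\sum_{j=1}^{n+r}D_j$. The first step is to record, under the identification $\mathcal{W}_T(X)=\Z^{n+r}$ fixed in the previous section, that the standard toric formula $K_X=-\sum_j D_j$ makes $-K_X$ correspond to the column vector $\aa=\mathbf{1}=(1,\dots,1)^T$. Consequently its free linear equivalence class is $d(-K_X)=Q\cdot\mathbf{1}$, and the vectors $\w_I$, $\w_\Si$ appearing in the statement are literally the vectors $\aa_I$, $\aa_\Si$ of Theorem \ref{thm:criterio} evaluated at $\aa=\mathbf{1}$. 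With this dictionary in place the three assertions become transcriptions of the three parts of Theorem \ref{thm:criterio}.

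For the equivalence $(1)\iff(2)$ I would invoke the ample half of Theorem \ref{thm:criterio}. By definition $X$ is $\Q$--Fano precisely when $-K_X$ admits an ample positive multiple $-mK_X$; Theorem \ref{thm:criterio} rewrites this condition as $\aa_I=Q_I^{-1}Q\,\mathbf{1}>0$ for every $I\in\I_\Si$, which is exactly condition (2). When $\g_\Si$ is a simplicial cone with generating matrix $G$, the shortcut $\w_\Si>0$ is inherited verbatim from the shortcut (\ref{ampiezza2}), again specialized to $\aa=\mathbf{1}$.

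The Gorenstein assertion will follow from the Cartier--index part of Theorem \ref{thm:criterio}. The only elementary point to note is that $K_X$ is Cartier if and only if $-K_X$ is Cartier, since the Cartier divisors form a subgroup of $\mathcal{W}_T(X)$ (equivalently $c(-K_X)=c(K_X)$); hence $X$ is Gorenstein iff $c(-K_X)=1$. By Theorem \ref{thm:criterio} the Cartier index of $-K_X$ equals the least common multiple of the denominators occurring in the $\w_I$, $I\in\I_\Si$ (or in $\w_\Si$ in the simplicial case), so $c(-K_X)=1$ is equivalent to all these vectors having integer entries. Note that this step uses the Cartier--index formula unconditionally, independently of any positivity.

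Finally, for the Fano case I would combine the two preceding ingredients via the last sentence of Theorem \ref{thm:criterio}, which asserts that $D$ is an honest nef (ample) divisor, and not merely a divisor with a nef (ample) multiple, exactly when the inequalities hold \emph{and} the $\aa_I$ are integral. Applied to $-K_X$ this reads: $X$ is Fano $\iff$ $\w_I>0$ for every $I$ (the $\Q$--Fano condition (2)) together with integrality of the $\w_I$ (the Gorenstein condition), which is precisely the ``in addition'' clause. I expect no genuine obstacle here, the argument being a pure specialization; the only two places demanding a line of care are the standard identity $-K_X=\sum_j D_j$ and the remark that $K_X$ and $-K_X$ are simultaneously Cartier, after which every claim reduces to substituting $\aa=\mathbf{1}$ into Theorem \ref{thm:criterio}.
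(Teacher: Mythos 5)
Your proposal is correct and is exactly the paper's (implicit) argument: the paper offers no separate proof of Theorem \ref{thm:Q-Fano}, presenting it as an immediate application of Theorem \ref{thm:criterio} to $-K_X=\sum_{j=1}^{n+r}D_j$, i.e.\ the substitution $\aa=\mathbf{1}$, with the Gorenstein and Fano clauses read off from the Cartier--index part of that theorem. Your two points of care (the identity $-K_X=\sum_j D_j$ and the simultaneous Cartier property of $\pm K_X$, the latter justified by the unconditional validity of the Cartier--index formula via $\Pic(X(\Si))=\bigcap_{I\in\I_{\Si}}\Ls_c(Q_I)$) are precisely the details the paper leaves tacit.
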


 A finite surjective morphism $\varphi:Y\rightarrow X$ of $\Q$--factorial complete toric varieties is called a \emph{covering in codimension $1$} (or simply a \emph{$1$--covering}) if it is unramified in codimesion $1$, which is there exists a subvariety $V\subset X$ such that $\codim_X V\geq 2$ and $\varphi|_{Y_V}$ is a topological covering, where $Y_V:=\varphi^{-1}(X\setminus V)$. Moreover a \emph{universal covering in codimension $1$} is a $1$-covering $\varphi:Y\rightarrow X$ such that for any $1$--covering $\phi:X'\rightarrow X$ of $X$ there exists a $1$--covering $f:Y\rightarrow X'$ such that $\varphi=\phi\circ f$ \cite[Def~3.13, Rem.~3.14]{Buczynska}\cite[Def.~1.8]{RT-QUOT}. After \cite[Thm.~2.2]{RT-QUOT} every $\Q$--factorial complete toric variety $X$ admits a canonical universal 1--covering which is a PWS.

\begin{corollary}\label{cor:Q-Fano}
  Let $X(\Si)$ be a $\Q$--factorial complete toric variety and $Y(\widehat{\Si})$ be the PWS giving the 1--connected universal covering of $X$, as in \cite[Thm.~2.2]{RT-QUOT}. Then $X$ is (weak) $\Q$--Fano, Gorenstein, (weak) Fano if and only if $Y$ is (weak) $\Q$--Fano, Gorenstein, (weak) Fano, respectively.

  \noindent Moreover $X$ is (weak) $\Q$--Fano, Gorenstein, (weak) Fano if and only if every 1--covering $X'$ of $X$ is (weak) $\Q$--Fano, Gorenstein, (weak) Fano, respectively.
\end{corollary}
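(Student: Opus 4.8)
The plan is to reduce Corollary~\ref{cor:Q-Fano} to Theorem~\ref{thm:Q-Fano} by observing that the numerical conditions appearing there depend only on the weight matrix $Q$, and that $X$ and its universal $1$--covering $Y$ share the \emph{same} weight matrix. First I would recall from \cite[Thm.~2.2]{RT-QUOT} that the canonical universal $1$--covering $Y(\widehat{\Si})$ is the PWS obtained by replacing the (possibly cotorsion-laden) fan matrix $V$ of $X$ by the $CF$--matrix $\widehat{V}=\G(Q)$ while keeping $Q=\G(V)$ as a common Gale dual weight matrix; the quotient map $Y\to X$ corresponds precisely to killing $\Tors(\Cl(X))$. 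The crucial point is that all the data entering Theorem~\ref{thm:Q-Fano}(2) --- namely $Q$, the index set $\mathcal{I}_{\Si}=\mathcal{I}_{\widehat{\Si}}$, the submatrices $Q_I$, and the anticanonical class $d(-K)=Q\cdot\mathbf{1}$ --- are computed from $Q$ alone and are insensitive to the torsion part of $\Cl$. Since $Y$ and $X$ have identical such data, the strict-positivity conditions $\w_I>0$ hold for $Y$ if and only if they hold for $X$, giving the first equivalence for the $\Q$--Fano property.

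Next I would handle the Gorenstein and Fano refinements. Here the subtlety is that the integrality condition ``$\w_I$ has integer entries for all $I$'' is, via the last part of Theorem~\ref{thm:criterio}, equivalent to $d(-K)\in\bigcap_I\Ls_c(Q_I)=\Pic$, and one must check that this Picard-membership condition transfers correctly between $X$ and $Y$. The cleanest way is to note that the vectors $\w_I=Q_I^{-1}Q\cdot\mathbf{1}$ are literally the same rational vectors for $X$ and for $Y$, since they are defined purely by $Q$ and $\mathbf{1}$; hence their denominators, and in particular whether they are integral, coincide verbatim. Therefore $K_Y$ is Cartier exactly when $K_X$ is, and $-K_Y$ is ample (Fano) exactly when $-K_X$ is. This yields the first assertion of the Corollary in full.

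For the second assertion --- that $X$ is $\Q$--Fano (Gorenstein, Fano) iff \emph{every} $1$--covering $X'$ is so --- the plan is to factor any $1$--covering through the universal one. By the universal property recalled just before the Corollary, every $1$--covering $\phi\colon X'\to X$ fits into a factorization $\varphi=\phi\circ f$ with $f\colon Y\to X'$ a $1$--covering, so $Y$ is also the universal $1$--covering of $X'$. Applying the already-established first assertion to the pair $(X',Y)$ shows $X'$ is $\Q$--Fano (resp.\ Gorenstein, Fano) iff $Y$ is, and applying it to $(X,Y)$ shows the same for $X$; chaining these equivalences gives that $X'$ and $X$ agree on each property, uniformly over all $X'$.

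The main obstacle I anticipate is pinning down precisely why the weight matrix, the combinatorial index set $\mathcal{I}_{\Si}$, and hence the chamber $\g_{\Si}$ are genuinely preserved under passage to the universal $1$--covering. Concretely, one must verify that $Y$ and $X$ determine the \emph{same} $\P\SF(V)$-type bunch structure and the same $\mathcal{I}$, so that $\g_{\widehat{\Si}}=\g_{\Si}$ inside $F^r_{\R}$; this is exactly the content of the $\Z$--linear Gale duality picture of \cite{RT-QUOT}, where the torsion matrix $\Ga$ is orthogonal to the weight data. Once this identification of Gale-dual data is made explicit, the rest is a formal transport of the numerical criteria of Theorem~\ref{thm:Q-Fano}, so I expect the geometric-combinatorial invariance to be the only substantive step, with the Gorenstein/Fano integrality claims following immediately from the equality of the rational vectors $\w_I$.
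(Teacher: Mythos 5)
Your proposal is correct and takes essentially the same approach as the paper's proof: both hinge on the observation that $X$ and its universal $1$--covering $Y$ share the same weight matrix $Q=\G(V)=\G(\widehat{V})$ and the same index set $\mathcal{I}_{\Si}=\mathcal{I}_{\widehat{\Si}}$, so that the rational vectors $\w_I$ in Theorem~\ref{thm:Q-Fano}(2) coincide verbatim for $-K_X$ and $-K_Y$, settling positivity ($\Q$--Fano), integrality (Gorenstein), and their conjunction (Fano) simultaneously. Your treatment of the second assertion, factoring an arbitrary $1$--covering $X'$ through $Y$ via the universal property so that $Y$ is also the universal $1$--covering of $X'$, is just a spelled-out version of the paper's one-line remark that the claim follows by replacing $X$ with $X'$.
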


\begin{proof}
If $V$ is a fan matrix of $X$, then $\widehat{V}=\G(Q)=\G(\G(V))$ is a $CF$--matrix giving a fan matrix of $Y$ \cite[Prop.~3.11]{RT-LA&GD}. Therefore both $X$ and $Y$ admit a same weight matrix given by $\G(V)=Q=\G(\widehat{V})$ \cite[Def.~3.9]{RT-LA&GD}. Since the choice of $\Si\in\SF(V)$ uniquely determines the choice of $\widehat{\Si}\in\SF(\widehat{V})$, then $\mathcal{I}_{\Si}=\mathcal{I}_{\widehat{\Si}}$. This suffices to show that the ampleness conditions in part (2) of Theorem \ref{thm:Q-Fano} is the same both for $-K_X$ and $-K_Y$, ending up the proof of the first part of the statement. The second part follows immediately by replacing $X$ with the 1--covering $X'$.
\end{proof}

Let us observe that a similar statement does no more hold for any abelian co\-ve\-ring $Y\twoheadrightarrow X$: e.g. it does not hold for a toric cover, in the sense of \cite{AP} and \cite[\S~3.2.2]{RT-Qfproj}, since the weight matrices of the basis $X$ and of the covering $Y$ are no more the same. A counterexample is given by $X(\Si_{10})\stackrel{72:1}{\longrightarrow}\P^W(\mathcal{E})$ in Example~\ref{ex:WPTB(c)} below.

\section{Examples}\label{sez:esempi}
We conclude this paper by giving concrete applications of theorems \ref{thm:criterio} and \ref{thm:Q-Fano}. At this purpose we analyze all the examples presented in \cite{RT-Qfproj}.

  \begin{example}\label{ex:noconverse}\cite[Ex.~3.31]{RT-Qfproj}
    Let $X$ be the 2--dimensional PWS of rank 2 given by the blow up in one point of the weighted projective plane $\P(1,2,1)$, whose reduced weight and fan matrices are, respectively, given by
\begin{equation*}
    Q=\left(
        \begin{array}{cccc}
          1 & 2 & 1 & 0 \\
          0 & 1 & 1 & 1 \\
        \end{array}
      \right)\quad\Rightarrow\quad V=\G(Q)=\left(
                                             \begin{array}{cccc}
                                               1 & 0 & -1 & 1 \\
                                               0 & 1 & -2 & 1 \\
                                             \end{array}
                                           \right)
\end{equation*}
Then $\Mov(V)=\langle\q_2,\q_3\rangle=\left\langle\begin{array}{cc}
                            2 & 1 \\
                            1 & 1
                          \end{array}
\right\rangle \subset \gkz=F^2_+$, and there is a unique chamber $\g=\Mov(V)$, giving a unique fan $\Si_{\g}$.
Therefore
$$\w_{\Si_{\g}}=(\q_2,\q_3)^{-1}\cdot Q\cdot  \left(
              \begin{array}{c}
                1 \\
              \vdots \\
                1 \\
              \end{array}
            \right)= \left(
                       \begin{array}{cc}
                         1 & -1 \\
                         -1 & 2 \\
                       \end{array}
                     \right)\cdot\left(
                                   \begin{array}{c}
                                     4 \\
                                     3 \\
                                   \end{array}
                                 \right)=\left(
                                   \begin{array}{c}
                                     1 \\
                                     2 \\
                                   \end{array}
                                 \right)>0
            $$
            and $X$ is a Fano toric variety.
            \end{example}

\begin{example}\label{ex:PTB}\cite[Ex.~3.40]{RT-Qfproj} Consider the smooth projective toric variety $X(\Si)$ given by the blow up of $\P^3$ in two distinct points. X has dimension and rank equal to 3, with reduced fan matrix $V$ given by
\begin{equation*}
    V=\left(
        \begin{array}{cccccc}
          1 & 0 & 0 & 0 & -1 & 1 \\
          0 & 1 & 0 & 0 & -1 & 1 \\
          0 & 0 & 1 & -1 & -1 & 1 \\
        \end{array}
      \right)\ \Longrightarrow\ Q=\left(
                             \begin{array}{cccccc}
                               1 & 1 & 1 & 0 & 1 & 0 \\
                               0 & 0 & 1 & 1 & 0 & 0 \\
                               0 & 0 & 0 & 0 & 1 & 1 \\
                             \end{array}
                           \right)=\G(V)\,.
\end{equation*}
\begin{figure}
\begin{center}
\includegraphics[width=7truecm]{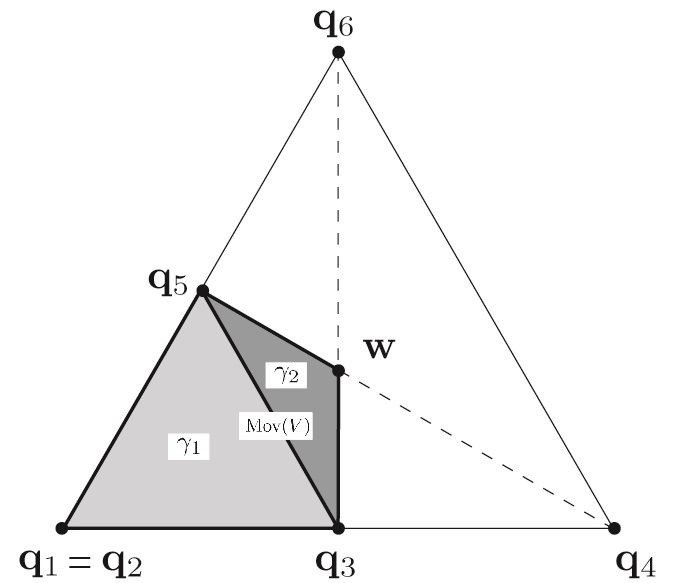}
\caption{\label{fig2}Ex.~\ref{ex:PTB}: the section of the cone $\Mov(V)$ and its chambers, inside the Gale dual cone $\mathcal{Q}=F^3_+$, as cut out by the plane $\sum_{i=1}^3x_i=1$.}
\end{center}
\end{figure}
One can visualize the Gale dual cone $\gkz=\langle Q\rangle =F^3_+$ and $\Mov(V)\subseteq\gkz$ giving a picture of their section with the hyperplane $\{x_1+x_2+x_3=1\}\subseteq F^3_{\R}\cong\Cl(X)\otimes\R$, as in Fig.~\ref{fig2}.
\cite{Kleinschmidt-Sturmfels} gives that $\P\SF(V)=\SF(V)$ and the latter is described by the only two
chambers of $\Mov(V)$ represented in Fig.~\ref{fig2}, so giving
$\P\SF(V)=\SF(V)=\{\Si_1=\Si_{\g_1},\Si_2=\Si_{\g_2}\}$.

\noindent Notice that
$$-d(K_X)=Q\cdot\left(
              \begin{array}{c}
                1 \\
              \vdots \\
                1 \\
              \end{array}
            \right)=\left(
                     \begin{array}{c}
                       4 \\
                       2 \\
                       2 \\
                     \end{array}
                   \right)=2\q_3+2\q_5$$
                   meaning that the anti-canonical class gives a point lying on the wall separating the only two chambers $\g_1,\g_2$ of $\Mov(V)$. Therefore for both the fans $\Si_1,\Si_2\in\SF(V)$, the anti-canonical divisor turns out to be a nef and no ample divisor: hence no fans in $\SF(V)$ give a $\Q$-Fano toric variety. Anyway $X(\Si_i)$ turns out to be Gorenstein and weak Fano, for both $i=1,2$. This fact can be deduced also by observing that both the chambers are simplicial and
                   $$\w_{\Si_1}=\left(
                                 \begin{array}{c}
                                   0 \\
                                   2 \\
                                   2 \\
                                 \end{array}
                               \right)\geq 0\quad,\quad\w_{\Si_2}=\left(
                                             \begin{array}{c}
                                               2 \\
                                               0 \\
                                               2 \\
                                             \end{array}
                                           \right)\geq 0\,.$$
\end{example}

\begin{example}\label{ex:nototmaxbord}\cite[Ex.~3.41]{RT-Qfproj} By adding the further column $\left(
                                                \begin{array}{c}
                                                  1 \\
                                                  1 \\
                                                  1 \\
                                                \end{array}
                                              \right)
$ in the weight matrix of the previous Example \ref{ex:PTB}, one gets the following reduced weight and fan matrices
\begin{equation*}
Q=\left(
                             \begin{array}{ccccccc}
                               1 & 1 & 1 & 0 & 1 & 1 & 0 \\
                               0 & 0 & 1 & 1 & 1 & 0 & 0 \\
                               0 & 0 & 0 & 0 & 1 & 1 & 1 \\
                             \end{array}
                           \right)\ \Rightarrow\ V=\left(
        \begin{array}{ccccccc}
          1 & 0 & 0 & 0 & 0 & -1 & 1 \\
          0 & 1 & 0 & 0 & 0 & -1 & 1 \\
          0 & 0 & 1 & 0 & -1 & 0 & 1 \\
          0 & 0 & 0 & 1 & -1 & 1 & 0
        \end{array}
      \right)=\G(Q)
\end{equation*}
The new weight column introduces a further subdivision in $\gkz=F^3_+$, leaving unchanged $\Mov(V)$ and giving
            $\P\SF(V)=\SF(V)=\{\Si_1,\Si_2,\Si_3,\Si_4\}$, as represented in Fig.~\ref{fig3}.
            \begin{figure}
            \begin{center}
            \includegraphics[width=7.5truecm]{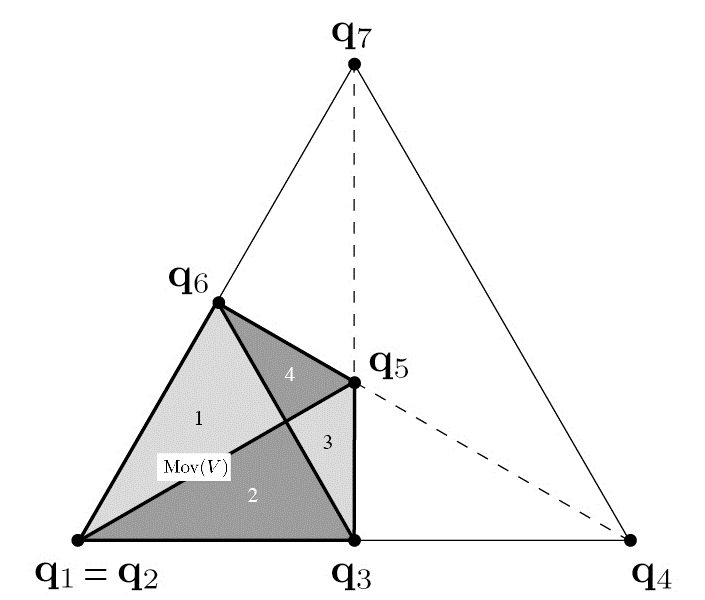}
            \caption{\label{fig3}Ex.~\ref{ex:nototmaxbord}: the section of the cone $\Mov(V)$ and its four chambers, inside the Gale dual cone $\mathcal{Q}=F^3_+$, as cut out by the plane $\sum_{i=1}^3x_i=1$.}
            \end{center}
            \end{figure}
             Notice that $X(\Si_i)$ is smooth for every $i=1,\ldots,4$.

\noindent Also in this case one can easily observe that
    $$-d(K_X)=Q\cdot\left(
              \begin{array}{c}
                1 \\
              \vdots \\
                1 \\
              \end{array}
            \right)=\left(
                     \begin{array}{c}
                       5 \\
                       3 \\
                       3 \\
                     \end{array}
                   \right)=2\q_3+2\q_6+\q_5 = 2\q_1+3\q_5$$
                   meaning that, on the one hand, the anti-canonical class gives an internal point of the cone $\langle\q_3,\q_5,\q_6\rangle=\g_3+\g_4$ and, on the other hand, it gives a  point lying on the wall passing through $\q_1$ and $\q_5$ (see Fig.~\ref{fig3}). Therefore $-d(K_X)$ is a point on the wall separating chambers $\g_3$ and $\g_4$ and the anti-canonical divisor $-K_X$ turns out to be a nef no ample divisor for $X(\Si_3)$ and $X(\Si_4)$. Moreover $-K_X$ is not even a nef divisor for $X(\Si_1)$ and $X(\Si_2)$. Then no fans in $\SF(V)$ give a $\Q$-Fano toric variety, but $X(\Si_3)$ and $X(\Si_4)$ are weak Fano. Such a conclusion can be numerically obtained by observing that all the four chambers are simplicial and
                   $$\w_{\Si_1}=\left(
                                                    \begin{array}{c}
                                                      3 \\
                                                      0 \\
                                                      -1 \\
                                                    \end{array}
                                                  \right)\ ,\ \w_{\Si_2}=\left(
                                                                \begin{array}{c}
                                                                  0 \\
                                                                  3 \\
                                                                  -1 \\
                                                                \end{array}
                                                              \right)\ ,
                                                              \ \w_{\Si_3}=
                                                              \left(\begin{array}{c}
                                                              2 \\
                                                              1 \\
                                                              0 \\
                                                              \end{array}
                                                              \right)\geq 0\ ,
                                                              \ \w_{\Si_4}=\left(
                                                              \begin{array}{c}
                                                              1 \\
                                                              2 \\
                                                              0 \\
                                                              \end{array}
                                                              \right)\geq 0\,.$$
Clearly $X(\Si_i)$ is Gorenstein for every $1\leq i\leq 4$.
\end{example}

\begin{example}\label{ex:noWPTB}\cite[Ex.~3.42]{RT-Qfproj} Let $X$ be the 2--dimensional PWS of rank 3 given by the blow up of the weighted projective plane $\P(1,2,1)$ in two distinct points, whose reduced fan and weight matrices are given by
\begin{equation*}
    V=\left(
        \begin{array}{ccccc}
          1 & 0 & -1 & 1 & -1 \\
          0 & 1 & -2 & 1 & -1 \\
        \end{array}
      \right)\ \Longrightarrow\ Q=\left(
                             \begin{array}{ccccc}
          1 & 1 & 0 & 0 & 1 \\
          0 & 1 & 1 & 1 & 0 \\
          0 & 0 & 0 & 1 & 1 \\
        \end{array}
                           \right)=\G(V)\,.
\end{equation*}
Then $\P\SF(V)=\SF(V)=\{\Si\}$ and the unique simplicial complete fan $\Si$ is as\-so\-cia\-ted with the unique chamber $\g=\Mov(V)\subset\mathcal{Q}$
which is not a simplicial cone (see Fig.\ref{fig4}).
\begin{figure}
\begin{center}
\includegraphics[width=8truecm]{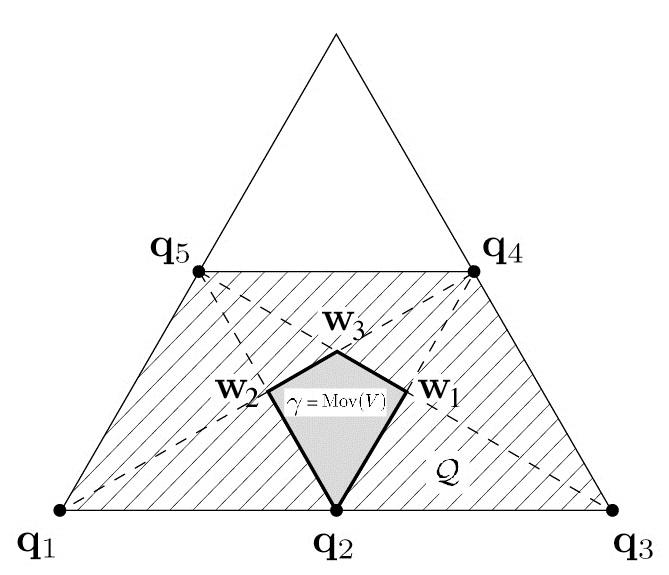}
\caption{\label{fig4}Ex.~\ref{ex:noWPTB}: the section of the cone $\g=\Mov(V)$ inside the Gale dual cone $\mathcal{Q}\subset F^3_+$, as cut out by the plane $\sum_{i=1}^3x_i=1$.}
\end{center}
\end{figure}
 We have then to apply the first condition of Theorem \ref{thm:Q-Fano}(2). Calling $\Si=\Si_{\g}$, we get
        $\mathcal{I}_{\Si}=\{\{1, 2, 4\}, \{2, 4, 5\}, \{1, 3, 4\}, \{1, 3, 5\}, \{2, 3, 5\}\}$
        and
        \begin{eqnarray*}
        % \nonumber to remove numbering (before each equation)
          \w_{\{1, 2, 4\}} &=& \left(
                                 \begin{array}{c}
                                   2 \\
                                   1 \\
                                   2 \\
                                 \end{array}
                               \right)>0
           \\
          \w_{\{2, 4, 5\}} &=& \left(
                                 \begin{array}{c}
                                   2 \\
                                   1 \\
                                   1 \\
                                 \end{array}
                               \right)>0 \\
          \w_{\{1, 3, 4\}} &=& \left(
                                 \begin{array}{c}
                                   3 \\
                                   1 \\
                                   2 \\
                                 \end{array}
                               \right)>0 \\
          \w_{\{1, 3, 4\}} &=& \left(
                                 \begin{array}{c}
                                   1 \\
                                   3 \\
                                   2 \\
                                 \end{array}
                               \right)>0 \\
          \w_{\{2, 3, 5\}} &=& \left(
                                 \begin{array}{c}
                                   1 \\
                                   2 \\
                                   2 \\
                                 \end{array}
                               \right)>0
        \end{eqnarray*}
        giving that $X(\Si)$ is a Fano toric variety.
\end{example}

\begin{example}\label{ex:WPTB(b)}\cite[Ex.~3.43]{RT-Qfproj} Consider the 4--dimensional PWS of rank 3 given by the following reduced fan and weight matrices
\begin{equation*}
    V=\left(
        \begin{array}{ccccccc}
          1 & 0 & 0 & -1 & 0 & 2 & -4 \\
          0 & 1 & 0 & -1 & 0 & 2 & -4 \\
          0 & 0 & 1 & -1 & 0 & 1 & -2 \\
          0 & 0 & 0 & 0 & 1 & -1 & 1 \\
        \end{array}
      \right)\Rightarrow Q=\left(
                             \begin{array}{ccccccc}
                               1 & 1 & 1 & 1 & 0 & 0 & 0\\
                               0 & 0 & 1 & 2 & 1 & 1 & 0\\
                               0 & 0 & 0 & 0 & 1 & 2 & 1\\
                             \end{array}
                           \right)=\G(V)
\end{equation*}
The first interest of this example is in the fact that
$$\sharp(\P\SF(V))=8<10=\sharp(\SF(V))$$
meaning that $V$ carries two distinct fans of $\Q$--factorial complete toric varieties of rank 3 which are not projective. In particular $X(\Si)$ is singular for every fan $\Si\in\SF(V)$.
\begin{figure}
\begin{center}
\includegraphics[width=7.5truecm]{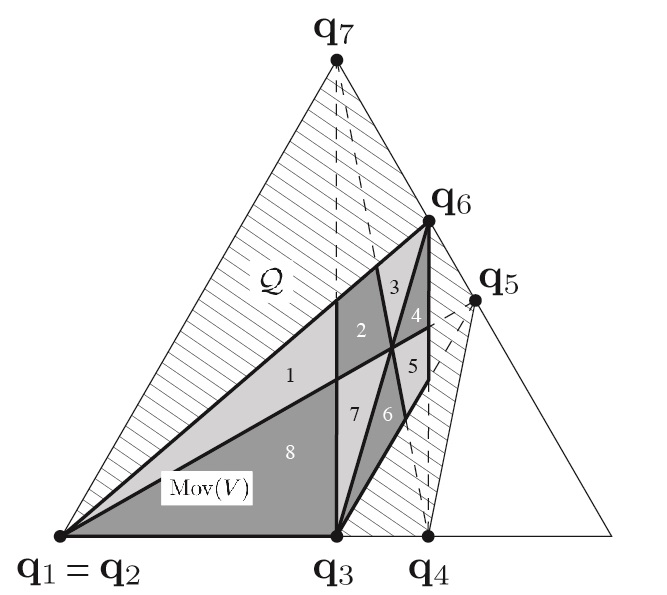}
\caption{\label{fig5}Ex.~\ref{ex:WPTB(b)}: this is the section cut out by the plane $\sum x_i=1$ of the cone $\Mov(V)$, with its eight chambers, inside the Gale dual cone $\mathcal{Q}\subset F^3_+$.}
\end{center}
\end{figure}
Consider the simplicial chamber
    $$\g=\left\langle\begin{array}{ccc}
                  1 & 1 & 1 \\
                  1 & 1 & 2 \\
                  0 & 1 & 2
                \end{array}
    \right\rangle$$
corresponding to the chamber 7 in Fig.~\ref{fig5}. Then
    $$\w_{\Si_{\g}}=\left(
                      \begin{array}{ccc}
                        0 & 1 & -1 \\
                        2 & -2 & 1 \\
                        -1 & 1 & 0 \\
                      \end{array}
                    \right)\cdot Q\cdot\left(
              \begin{array}{c}
                1 \\
              \vdots \\
                1 \\
              \end{array}
            \right)=\left(
              \begin{array}{c}
                1 \\
              2 \\
                1 \\
              \end{array}
            \right)>0
    $$
    meaning that $X(\Si_{\g})$ is a Fano toric variety and that the anti-canonical divisor $-K_X$ is not even a nef divisor for any other fan $\Si\in\SF(V)$ different from $\Si_{\g}$.
    \end{example}

\begin{example}\label{ex:WPTB(c)}\cite[Ex.~3.44]{RT-Qfproj} The present example is obtained by the previous Ex.~\ref{ex:WPTB(b)} by \emph{breaking the symmetry} around the ray $\left\langle(1,2,2)^T\right\rangle\in\Ga(1)$ of the secondary fan $\Ga$. This is enough to get a simplicial and complete fan $\Si\in\SF(V)$ such that $\g_{\Si}=\Nef(X(\Si))=0$. Namely consider the 4--dimensional PWS of rank 3 given by the following reduced fan and weight matrices
\begin{equation*}
    V=\left(
        \begin{array}{ccccccc}
          1 & 0 & -1 & 0 & 0 & 6 & -12 \\
          0 & 1 & -1 & 0 & 0 & 4 & -8 \\
          0 & 0 & 0 & 1 & 0 & -2 & 4 \\
          0 & 0 & 0 & 0 & 1 & -1 & 1 \\
        \end{array}
      \right)\Rightarrow Q=\left(
                             \begin{array}{ccccccc}
                               1 & 1 & 1 & 0 & 0 & 0 & 0\\
                               0 & 2 & 6 & 2 & 1 & 1 & 0\\
                               0 & 0 & 0 & 0 & 1 & 2 & 1\\
                             \end{array}
                           \right)=\G(V)
\end{equation*}
\begin{figure}
\begin{center}
\includegraphics[width=11truecm]{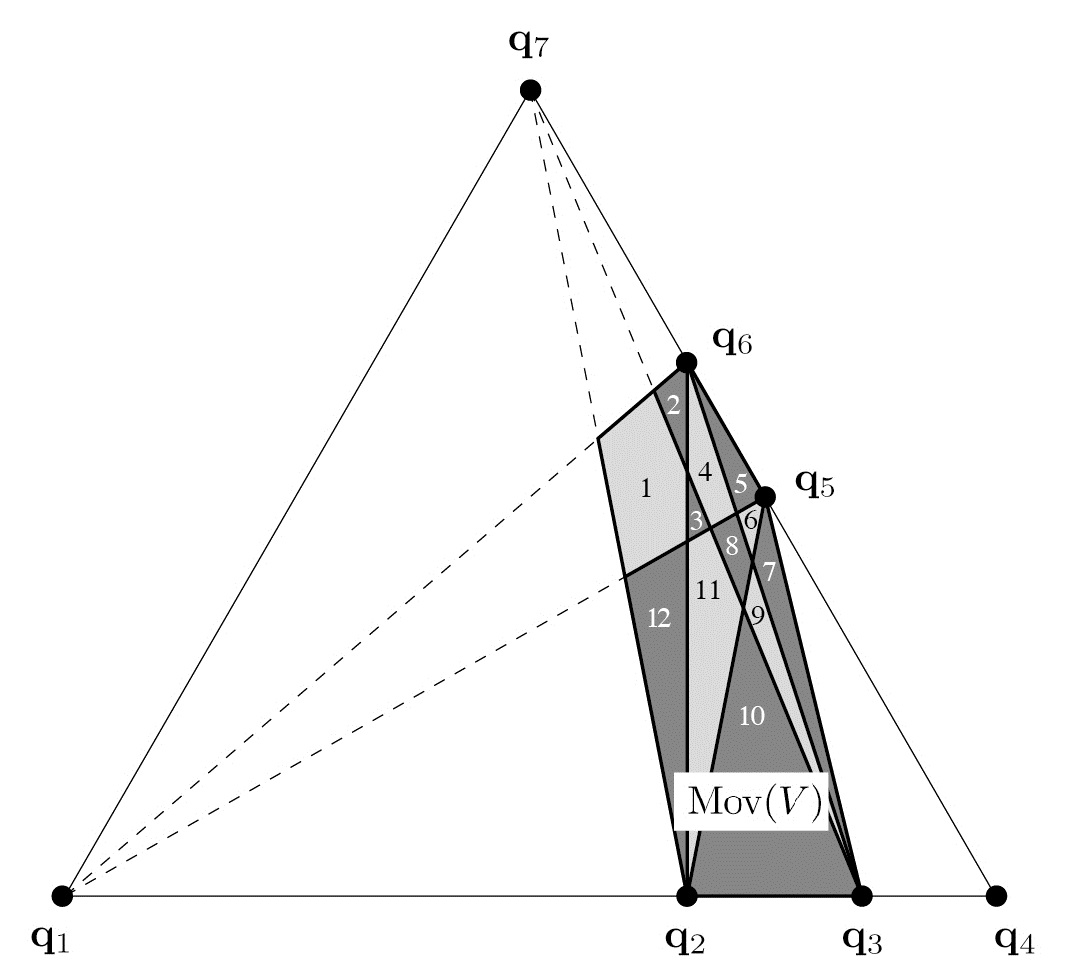}
\caption{\label{fig6}Ex.~\ref{ex:WPTB(c)}: the section of the cone $\Mov(V)$ and its twelve chambers, inside the Gale dual cone $\mathcal{Q}=F^3_+$, as cut out by the plane $\sum_{i=1}^3x_i=1$.}
\end{center}
\end{figure}
   In this case $\sharp(\P\SF(V))=12<13=\sharp(\SF(V))$ and choosing the maxbord and simplicial chamber
    $$\g_{10}:=\left\langle\begin{array}{ccc}
                              1 & 1 & 1 \\
                              2 & 6 & 6 \\
                              0 & 0 & 4
                            \end{array}
        \right\rangle$$
        (see Fig.~\ref{fig6}) the positive multiple of the anti-canonical divisor $-2K_X$ turns out to be ample since
        $$\w_{\Si_{10}}= \left(\begin{array}{ccc}
                                                     0 & 1 & -6 \\
                                                     \frac{1}{3} & -\frac{1}{3} & 1 \\
                                                     -\frac{1}{3} & \frac{1}{3}  & 0
                                                   \end{array}
        \right)\cdot Q\cdot\left(
              \begin{array}{c}
                1 \\
              \vdots \\
                1 \\
              \end{array}
            \right)=\left(
              \begin{array}{c}
                1/2 \\
              3/2 \\
                1 \\
              \end{array}
            \right)>0 $$
        Then $X(\Si_{10})$ is $\Q$--Fano but it is not Gorenstein.

        \noindent From the analysis performed in \cite[Ex.~3.44]{RT-Qfproj}, it turns out that $X(\Si_{10})$ is a toric cover $72:1$ of the weighted projective toric bundle (WPTB) $$\P^W(\mathcal{E})=\P^{(1,2,1)}\left(\mathcal{O}_Y\oplus\mathcal{O}_Y(6D'_4)^{\oplus 2}\right)$$
        where $Y$ is the 2-dimensional PWS of rank 2 whose weight matrix is given by
        $$\widetilde{Q}'=\left(
                            \begin{array}{cccc}
                              1 & 1 & 1 & 0 \\
                              0 & 1 & 3 & 1 \\
                            \end{array}
                          \right)$$
        and $D'_4$ is the torus invariant divisor of $Y$ given by the closure of the torus orbit of the ray generated by the fourth column of $\widetilde{V}'=\G(\widetilde{Q}')$. Then $\P^W(\mathcal{E})$ is not (weak) $\Q$--Fano but it is Gorenstein: in fact $\P^W(\mathcal{E})$ is the toric variety associated with the choice of the chamber
        $$\stackrel{\approx}{\g}=\left\langle\begin{array}{ccc}
                                                   1 & 1 & 3 \\
                                                   1 & 3 & 9 \\
                                                   0 & 0 & 1
                                                 \end{array}
        \right\rangle\subseteq\Mov\left(\stackrel{\approx}{V}\right)$$
        with
        $$
        \stackrel{\approx}{V}=\left(\begin{array}{ccccccc}
          1 & 0 & -1 & 3 & 0 & 0 & 0 \\
          0 & 1 & -1 & 2 & 0 & 0 & 0 \\
          0 & 0 & 0 & 6 & 0 & -1 & 2 \\
          0 & 0 & 0 & 0 & 1 & -1 & 1 \\
        \end{array}
      \right)$$
        and
        $$\w_{\stackrel{\approx}{\Si}}=\left(
                                             \begin{array}{ccc}
                                               \frac{3}{2} & -\frac{1}{2} & 0 \\
                                               0 & 0 & 1 \\
                                               -\frac{1}{2} & \frac{1}{2} & -3 \\
                                             \end{array}
                                           \right)\cdot \stackrel{\approx}{Q}\cdot \left(
              \begin{array}{c}
                1 \\
              \vdots \\
                1 \\
              \end{array}
            \right)=\left(
              \begin{array}{c}
                -4 \\
               4  \\
                -5 \\
              \end{array}
            \right)\,.
        $$
\end{example}

\begin{example}\label{ex:quotient}\cite[Ex.~4.7]{RT-Qfproj}   Consider the following $4\times 7$ $F$--matrix
\begin{equation*}
    V=\left(
        \begin{array}{ccccccc}
          9&11&13&-33&9&44&-97 \\
          10&12&14&-36&10&48&-106 \\
          54&63&75&-192&51&258&-567 \\
          310&365&430&-1105&295&1485&-3265 \\
        \end{array}
      \right)
\end{equation*}
Notice that
\begin{equation*}
  Q=\G(V)=\left(
                                \begin{array}{ccccccc}
                                  1&1&1&1&0&0&0 \\
                                  0&0&1&2&1&1&0 \\
                                  0&0&0&0&1&2&1 \\
                                \end{array}
                              \right)\Rightarrow \G(Q)=
                              \left(
        \begin{array}{ccccccc}
          1 & 0 & 0 & -1 & 0 & 2 & -4 \\
          0 & 1 & 0 & -1 & 0 & 2 & -4 \\
          0 & 0 & 1 & -1 & 0 & 1 & -2 \\
          0 & 0 & 0 & 0 & 1 & -1 & 1 \\
        \end{array}\right)
\end{equation*}
meaning that $X(\Si)$ is a finite abelian quotient of a PWS described in Example~\ref{ex:WPTB(b)}, for every $\Si\in\SF(V)$. Then Corollary~\ref{cor:Q-Fano} allows us to conclude that we get a Fano toric variety for the choice of the chamber
$$\g=\left\langle\begin{array}{ccc}
                  1 & 1 & 1 \\
                  1 & 1 & 2 \\
                  0 & 1 & 2
                \end{array}
    \right\rangle$$
    corresponding to the chamber 7 in Fig.~\ref{fig5}.
\end{example}


\begin{thebibliography}{10}

\bibitem{AP}
{\sc Alexeev, V., and Pardini, R.}
\newblock On the existence of ramified abelian covers.
\newblock {\em Rend. Semin. Mat. Univ. Politec. Torino 71}, 3-4 (2013), 307--315.

\bibitem{BC}
{\sc Batyrev, V.~V., and Cox, D.~A.}
\newblock On the {H}odge structure of projective hypersurfaces in toric
  varieties.
\newblock {\em Duke Math. J. 75}, 2 (1994), 293--338.

\bibitem{Berchtold-Hausen}
{\sc Berchtold, F., and Hausen, J.}
\newblock Bunches of cones in the divisor class group---a new combinatorial
  language for toric varieties.
\newblock {\em Int. Math. Res. Not.}, 6 (2004), 261--302.

\bibitem{Buczynska}
{\sc Buczy\'{n}ska, W.}
\newblock Fake weighted projective spaces.
\newblock {\em \texttt{ arXiv:0805.1211}\/}.

\bibitem{Conrads}
{\sc Conrads, H.}
\newblock Weighted projective spaces and reflexive simplices.
\newblock {\em Manuscripta Math. 107}, 2 (2002), 215--227.

\bibitem{Cox}
{\sc Cox, D.~A.}
\newblock The homogeneous coordinate ring of a toric variety.
\newblock {\em J. Algebraic Geom. 4}, 1 (1995), 17--50.

\bibitem{CLS}
{\sc Cox, D.~A., Little, J.~B., and Schenck, H.~K.}
\newblock {\em Toric varieties}, vol.~124 of {\em Graduate Studies in
  Mathematics}.
\newblock American Mathematical Society, Providence, RI, 2011.

\bibitem{Fujino}
{\sc Fujino, O.}
\newblock On the {K}leiman--{M}ori cone.
\newblock {\em Proc. Japan. Acad. 81\/} (2005), 80--84.

\bibitem{Kleinschmidt-Sturmfels}
{\sc Kleinschmidt, P., and Sturmfels, B.}
\newblock Smooth toric varieties with small {P}icard number are projective.
\newblock {\em Topology 30}, 2 (1991), 289--299.

\bibitem{RT-Qfproj}
{\sc Rossi, M., and Terracini, L.}
\newblock A {Batyrev} type classification of $\mathbb{Q}$--factorial projective
  toric varieties.
\newblock \texttt{ arXiv:1504.06515}.

\bibitem{RT-QUOT}
{\sc Rossi, M., and Terracini, L.}
\newblock A $\mathbb{Q}$--factorial complete toric variety is a quotient of a
  poly weighted space.
\newblock {\em \emph{to appear in} Ann. Mat. Pur. Appl.\/}, \texttt{
  arXiv:1502.00879}.

\bibitem{RT-R2proj}
{\sc Rossi, M., and Terracini, L.}
\newblock A $\mathbb{Q}$--factorial complete toric variety with {Picard} number
  2 is projective.
\newblock \texttt{ arXiv:1504.03850}.

\bibitem{RT-WPS}
{\sc Rossi, M., and Terracini, L.}
\newblock Linear algebra and toric data of weighted projective spaces.
\newblock {\em Rend. Semin. Mat. Univ. Politec. Torino 70\/} (2012), 469--495.

\bibitem{RT-LA&GD}
{\sc Rossi, M., and Terracini, L.}
\newblock $\mathbb{Z}$--linear gale duality and poly weighted spaces {(PWS)}.
\newblock {\em Linear Algebra Appl. 495\/} (2016), 256--288; \texttt{
  arXiv:1501.05244}.

\end{thebibliography}
\end{document}